\definecolor{comcolor}{rgb}{0.9,0.3,0.3}
\definecolor{starcolor}{rgb}{0.3,0.3,0.9}
\definecolor{hscolor}{rgb}{0.9,0.6,0.5}
\definecolor{darkgreen}{rgb}{0.1,0.6,0.3}
\newtheorem{thm}{Theorem}[section]
\newtheorem{lemma}[thm]{Lemma}
\newtheorem{corollary}[thm]{Corollary}
\newtheorem{prop}[thm]{Proposition}
\theoremstyle{definition}
\newtheorem{rem}[thm]{Remark}
\newcommand{\be}[1]{\begin{equation}\label{#1}}
\newcommand{\ee}{\end{equation}}
\newcommand{\ba}{\begin{array}}
\newcommand{\ea}{\end{array}}
\newcommand{\bal}{\begin{aligned}}
\newcommand{\eal}{\end{aligned}}
\newcommand{\E}{\mathbb{E}}
\renewcommand{\P}{\mathbb{P}}
\newcommand{\Var}{\mathrm{Var}}
\title{
\textbf{The shape of a seed bank tree}}
\author{Adri\'an Gonz\'alez Casanova$^a$, Lizbeth Pe\~naloza$^b$, Arno Siri-J\'egousse$^b$\\
\small{$^a$ Instituto de Matem\'aticas, $^{b}$IIMAS, Departamento de Probabilidad y Estad\'istica.}\\
\small{Universidad Nacional Aut\'onoma de M\'exico.}}
\begin{document}
\maketitle
\begin{abstract}
We derive the asymptotic behavior of the total, active and inactive branch lengths of the seed bank coalescent, when the size of the initial sample grows to infinity.
Those random variables have important applications for populations evolving under some seed bank effects, such as plants and bacteria, and for some cases of structured populations { like} metapopulations.
The proof relies on the study of the tree at a stopping time corresponding to the first time that a deactivated lineage reactivates. 
We also give conditional sampling formulas for the random partition and we study the system at the time of the first deactivation of a lineage.
All these results provide a good picture of the different regimes and behaviors of the block-counting process of the seed bank coalescent.
\end{abstract}

\small{{\bf Keywords}: Seed bank, Structured coalescent, Branch lengths, Sampling formula.\\
{\bf MSC2010}: 60J95 (primary), 60C05, 60J28, 60F15, 92D25.}

\section{Introduction}

Seeds, cysts and other forms of dormancy generate seed banks, which store genetic information that can be temporally lost from a population at a certain time and \textit{resuscitate} later. Having a seed bank is a prevalent evolutionary strategy which has important consequences. For example, in the case of bacteria, it buffers against the selective pressure caused by environmental variability and at the same time increases genetic variation \cite{GC2014, LJ,T2011}. 

A first attempt to construct a probabilistic model to study this phenomenon is due to Kaj, Krone and Lascoux \cite{K2001}.
They considered a modified Wright-Fisher model in which each individual \textit{chooses} its parent from the individuals at several generations in the past, and not only from the previous one. This construction has an important technical complication arising from the loss of the Markov property. 
A new model was defined and studied in \cite{Blath2016} to avoid this issue.
It consists in a two-level discrete Markov chain, which again generalizes the Wright-Fisher model.

Consider a haploid population of fixed size $N$ which supports a seed bank of constant size $M$. The $N$ \textit{active} individuals are called \textit{plants} and the $M$ \textit{dormant} individuals are called \textit{seeds}.
Let $0\leq\varepsilon\leq1$ be such that $\lfloor\varepsilon N\rfloor\leq M$.
The $N$ plants from generation 0 produce $N$ individuals in generation $1$ by multinomial sampling (as in the Wright-Fisher model). However, $N-\lfloor\varepsilon N\rfloor$ randomly chosen of these individuals are plants and $\lfloor\varepsilon N\rfloor$ are seeds.
Then $\lfloor\varepsilon N\rfloor$ uniformly (without replacement) sampled seeds from the seed bank in generation 0 become plants in generation 1. The $\lfloor\varepsilon N\rfloor$ seeds produced by the plants in generation 1, take the place of the seeds that germinate. Thus, we have again $N$ plants and $M$ seeds in generation 1 (see Figure \ref{sbm}).
This random mechanism is repeated independently to produce the next generations.
Observe that this model has, unlike \cite{K2001}, non-overlapping reproduction events.

\begin{figure}
	\centering
		\includegraphics[scale=0.3]{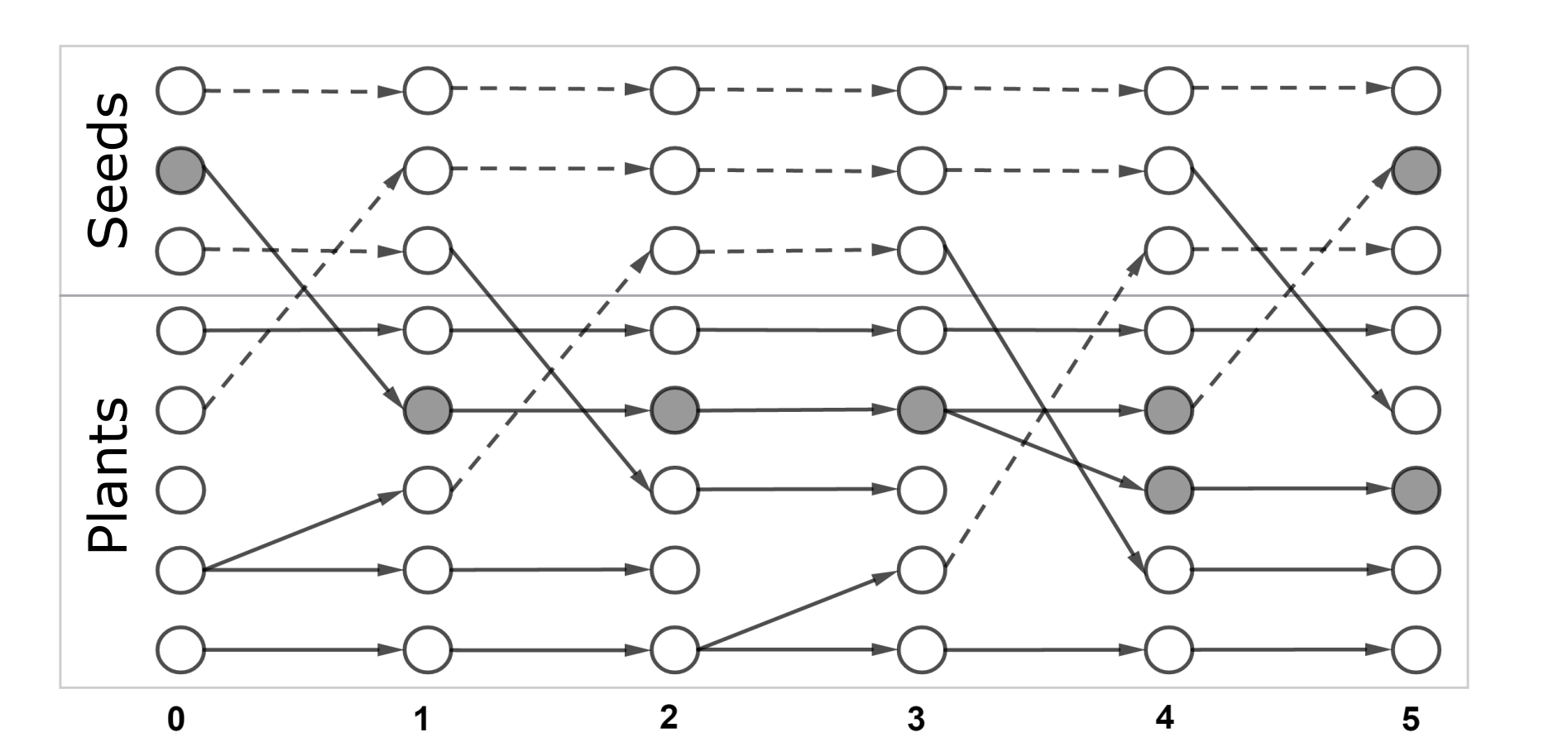}
	\caption{The discrete seed bank model. In this picture $N=5$, $M=3$ and $\lfloor\varepsilon N\rfloor$=1, i.e., in each generation four plants are produced by active individuals, one seed germinates and one new seed is produced.}
	\label{sbm}
\end{figure}

If we let $N$ (and $M$) go to infinity and rescale the time, the stochastic process that describes the limiting gene genealogy of a sample taken from the seed bank model is called the \textit{seed bank coalescent} \cite{Blath2016}.
Apart from populations of plants or bacteria, it is remarkable that the seed bank coalescent is a convenient genealogical model for some metapopulations \cite{L2013}. In fact, it was independently introduced in that context and named the \textit{peripatric coalescent}. It corresponds to a special modification of structured coalescence in which small colonies can emerge from a main population and merge again with it.
The seed bank coalescent is a structured coalescent with an active part, having the dynamics of a \textit{Kingman coalescent}, and a dormant part where the lineages are like frozen. Lineages can activate or deactivate at certain rates, see Figure \ref{figsc} for an illustration.
\begin{figure}
	\centering
		\includegraphics[scale=0.2]{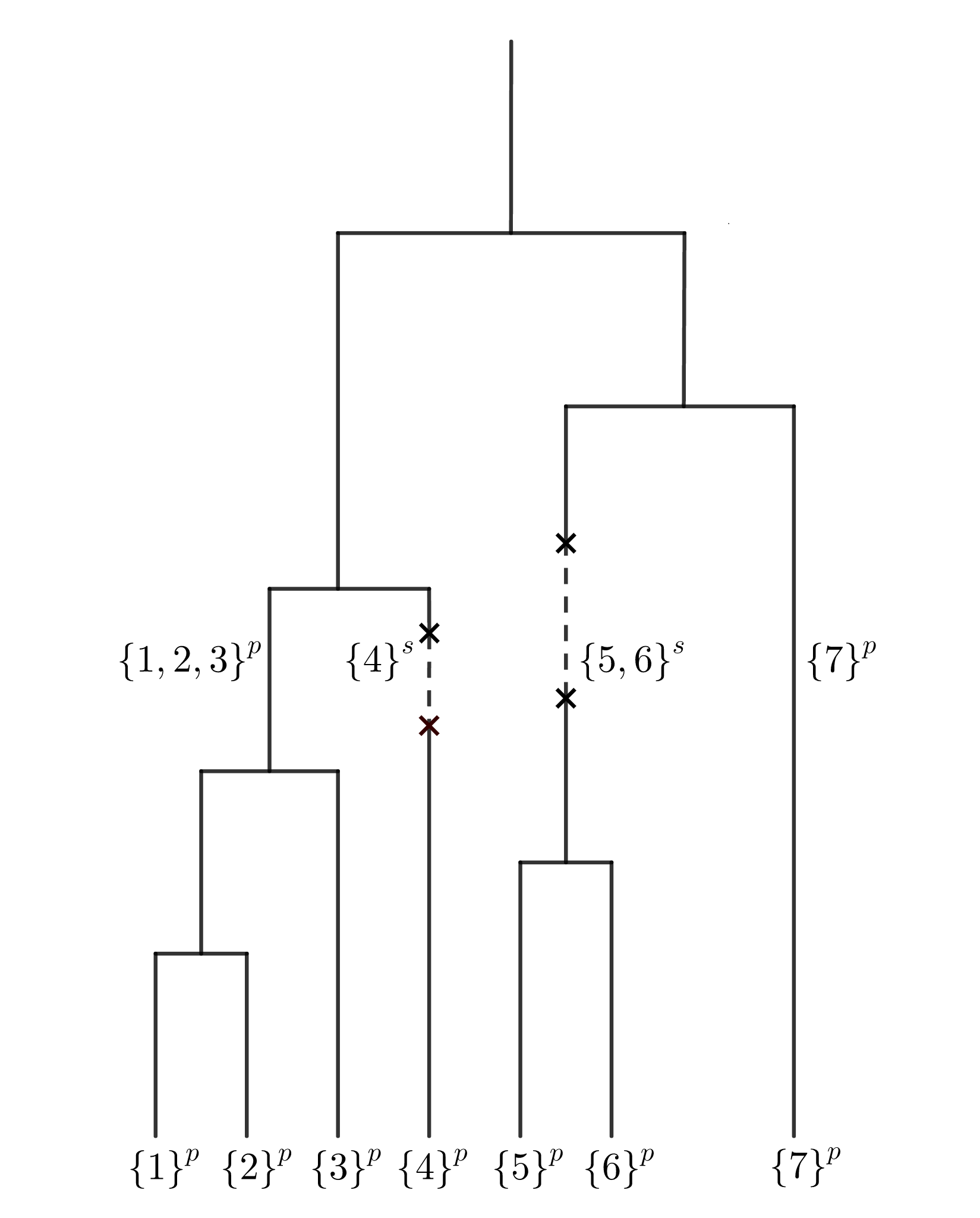}
	\caption{A possible realization of the seed-bank 7-coalescent. Dotted lines indicate inactive individuals and the crosses mean that there is a deactivation or a reactivation. }
	\label{figsc}
\end{figure}

In this paper we study the asymptotic behavior of some functionals of the seed bank tree.
 These can be useful for genetic applications, but also they provide a light shed on the connections between theory and applications. As an illustration, there is a close relation between the shape of the genealogical tree of a sample of size $n$ and the number of mutations observed in it. More precisely, suppose that mutations appear in the genealogy by simply superimposing a Poisson process on the ancestral lineages (as it is in the infinite sites model, see Chapter 1.4 in \cite{Durrett2008}). Then, the shape of the tree determines the distribution of the data obtained by DNA sequencing and thus, it can be inferred from it.
For example, conditionally on the total length of the coalescent, denoted by $L_n$, the number of mutations observed in the sample has Poisson distribution with parameter $\mu L_n$, where $\mu$ is the mutation rate. Thus, if we know the asymptotic behavior of the total length of the tree we can deduce the asymptotic behavior of the number of mutations. This is the key tool for obtaining a Watterson-type estimator for the mutation rate, see \cite{Durrett2008}. 
Not surprisingly, asymptotics of the total length of many classical coalescents have been studied, e.g. in \cite{DIMR,DDSJ, Ker12, DK}.

 In \cite{Blath2016}, it was established that the time to the most recent common ancestor of a sample of size $n$ in the seed bank coalescent is of order $\log\log n$. This is an important difference with the classical {Kingman coalescent}, whose height is finite.
 In our study, we establish that the total length of the tree built from a sample of $n$ plants and zero seeds is of the same order than that of the Kingman coalescent, behaving like $\log n$, but with a different multiplicative constant depending on the activation and deactivation parameters of the model.
 Moreover, we show that the total active length behaves precisely like the total length of the Kingman coalescent. 
 This means that it is technically very hard to distinguish between the null Kingman model and the alternative seed bank model, unless the dormant individuals have the possibility of mutate while being in the seed bank, that is actually the case in the metapopulation framework described in \cite{L2013}. To discriminate both null and seed bank models, some finer results such as sampling formulas can also be derived.
  We are able to describe the seed back tree in detail as it undergoes different phases. Indeed, it can be said that we describe the shape of the seed bank tree.
 
Our results also have practical implications. In \cite{Maughan}, Maughan observed experimentally that a population of bacteria undergoing dormancy typically does not have significantly different number of mutations. Our findings agree with this observation and offer new insights on the reason for this: most of the mutations occur in the {\it Kingman phase} i.e. shortly before the leaves of the tree, and in this part of the ancestral tree dormancy is irrelevant. On the other hand, populations suffering a significant amount of mutations while being in the dormant state would be expected to have a higher evolutionary rate. This remark together with \cite{Maughan} suggests that the mutations that occur to individuals in latent state play a minor role (at least number-wise). This is  opposed to previous works suggesting that the normal rate of molecular evolution of bacteria with a seed bank is evidence that mutations affecting dormant individuals are frequent \cite{Maughan}.

 \subsection{Main results}
 We study some relevant stopping times of the seed bank coalescent, leading to a complete description of the shape of the tree and explaining how long the genealogies  spend in successive dynamical phases, as is detailed precisely in Table \ref{summarytable} and Figure \ref{summaryfig}.

Let us now define properly the seed bank coalescent. Fix $n\in\mathbb N$ and let $\mathcal{P}_n$ be the set of  partitions of $[n]:=\lbrace1,2,...,n\rbrace$. Then, the set of marked partitions $\mathcal{P}_n^{\lbrace p,s \rbrace}$ is built out from $\mathcal{P}_n$ by adding a flag (either $p$ for a plant or $s$ for a seed) to each block of the partition. For example, for $n=7$, $\pi=\lbrace\lbrace 1,2 ,3\rbrace^{p},\lbrace4\rbrace^{s},\lbrace 5,6\rbrace^{s},\lbrace 7 \rbrace^{p}\rbrace$ is an element of  $\mathcal{P}_7^{\lbrace p,s \rbrace}$.
The seed bank $n$-coalescent $(\Pi_{n}(t))_{t\geq0}$, with deactivation intensity $c_1>0$ and activation intensity $c_2>0$, is the continuous-time Markov chain with values in $\mathcal{P}_n^{\lbrace p,s \rbrace}$ having the following dynamics.
As for the Kingman coalescent, each pair of plant blocks merges at rate 1, independent of each other. 
Moreover, any block can change its flag, from $p$ to $s$ at rate $c_1$, and vice versa at rate $c_2$, see Figure \ref{figsc} for an illustration.

The block-counting process of the seed bank $n$-coalescent is the two-dimensional Markov chain $(N_n(t),M_n(t))_{t\geq0}$ with values in $([n]\cup\{0\})\times([n]\cup\{0\})$ and the following transition rates, for $t\geq0$.
\begin{equation*}\label{seed}
(N_n(t),M_n(t))\text{ jumps from }(i,j)\text{ to}\left\{ \begin{array}{lll}
(i-1,j), & \textrm{ at rate } \binom{i}{2} & \mbox{(coalescence)},\\
(i-1,j+1), & \textrm{ at rate } c_1i &\mbox{(deactivation)},\\
(i+1,j-1), & \textrm{ at rate } c_2j &\mbox{(activation)}.
\end{array}
\right .
\end{equation*}

Note that $N_n(t)$  can have either an upward jump if a seed becomes a plant, or a downward jump if there is a coalescence event or a plant becomes a seed.
Also observe that each jump has size one.
In the sequel, we suppose that $N_n(0)=n$ and $M_n(0)=0$.

 For $i\in[n]$, we denote by $\tau_n^i$ the reaching time of the level $i$ by the process $N_n$, i.e. ${\tau}_n^n=0$ and
 \begin{equation}\label{deftau}
 \tau_n^i=\inf\{t\geq0:N_n(t)=i\}.
 \end{equation} 
 Furthermore, let $\gamma_n$ and $\theta_n$ be, respectively, the first time that some plant becomes a seed and the first time that some seed becomes a plant, i.e.
  \begin{equation}\label{defgamma}
  \gamma_n=\mbox{inf}\lbrace t>0: M_n({t-})<M_n(t)\rbrace=\mbox{inf}\lbrace t>0: M_n(t)=1\rbrace
  \end{equation}
   and 
   \begin{equation}\label{deftheta}
   \theta_n=\inf\{t>0:M_n(t-)>M_n(t)\}.
   \end{equation}
 Finally, denote by $\sigma_n$ the time to the most recent common ancestor, already studied in \cite{Blath2016},
 $$\sigma_n=\mbox{inf}\lbrace t>0: N_n(t)+M_n(t)=1\rbrace=\mbox{inf}\lbrace t>0: N_n(t)=1,M_n(t)=0\rbrace.$$

We first obtain asymptotic results on the random variables $\gamma_n$ and $\theta_n$ and the size of the system
at those times. The results obtained in Section \ref{secgamma} and \ref{sectheta} can be summarized in Table \ref{summarytable} and Figure \ref{summaryfig}.

\begin{table}%\label{summarytable}
\begin{center}
\begin{tabular}{lllll}
\cline{1-4}
\multicolumn{1}{|l|}{  {\footnotesize Stopping time ($\tau$)}} & \multicolumn{1}{l|}{{\footnotesize Asymptotics of $\tau$}} & \multicolumn{1}{l|}{{\footnotesize Asymptotics of $N_n(\tau)$}} & \multicolumn{1}{l|}{{\footnotesize Asymptotics of $M_n(\tau)$}} &  \\ \cline{1-4}
\multicolumn{1}{|l|}{$\gamma_n$}              & \multicolumn{1}{l|}{$2(1-Y)/Yn$}         & \multicolumn{1}{l|}{$Yn$}         & \multicolumn{1}{l|}{1}         &  \\ \cline{1-4}
\multicolumn{1}{|l|}{$\theta_n$}              & \multicolumn{1}{l|}{$T/\log n$}         & \multicolumn{1}{l|}{$Z\log n$}         & \multicolumn{1}{l|}{$2c_1\log n$}         &  \\ \cline{1-4}
   \multicolumn{1}{|l|}{$\sigma_n$}                                 &        \multicolumn{1}{l|}{$\log\log n$}                       &         \multicolumn{1}{l|}{$1$}                      &          \multicolumn{1}{l|}{$0$}                     & \\ \cline{1-4}
                                    &                               &                               &                               &
\end{tabular}
\end{center}
\caption{Summary of the asymptotic behavior of the functional of the seed bank coalescent studied in this work. Here $Y$ is a $Beta(2c_1,1)$ distributed random variable, $T$ is an exponential random variable with parameter $2c_1c_2$ and $Z$ is a Fr\'echet random variables with shape parameter 1 and scale parameter $4c_1c_2$.}
\label{summarytable}
\end{table}
\begin{figure}[ht!]
	\centering
		\includegraphics[scale=.4]{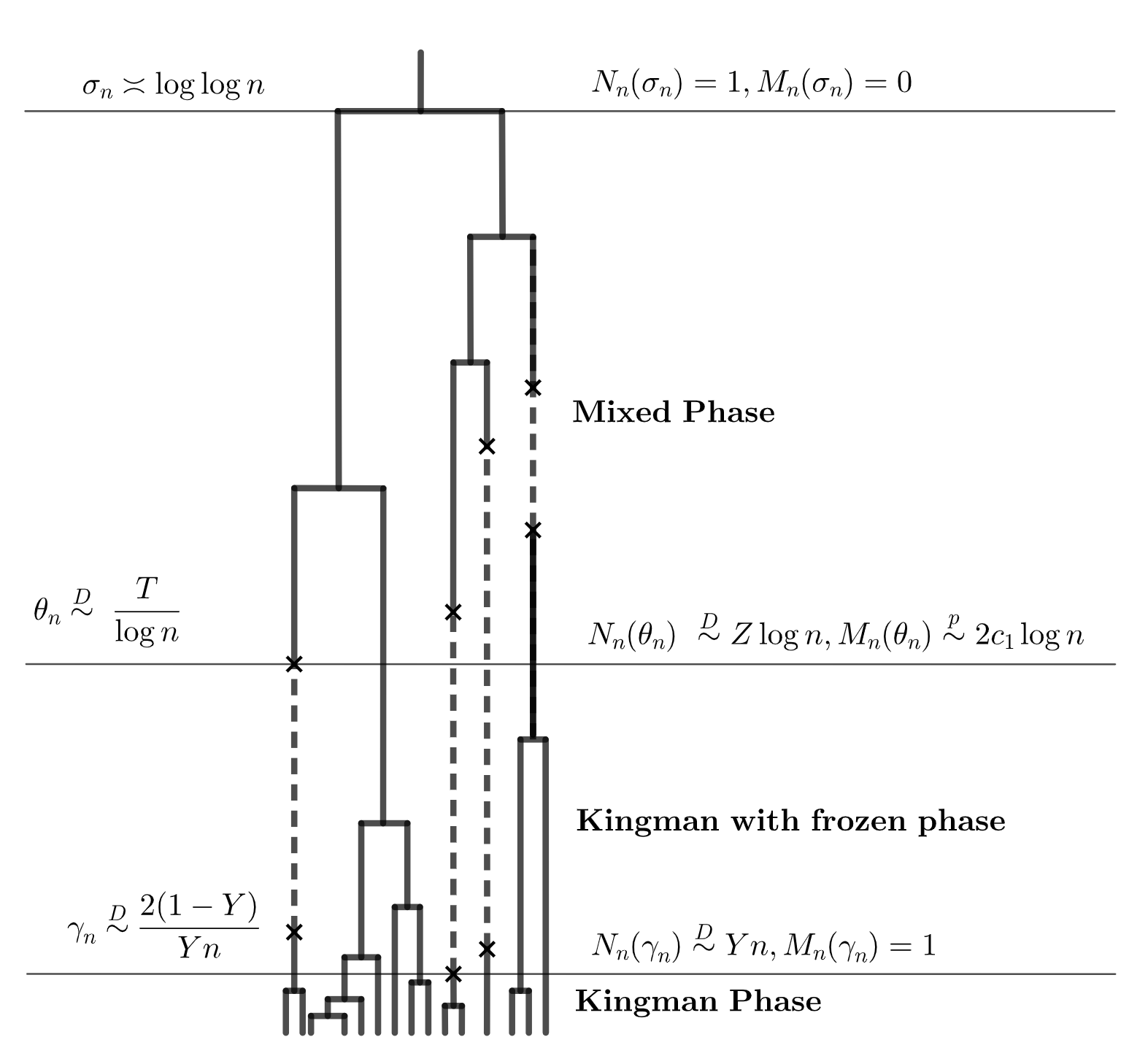}
	\caption{Summary of the asymptotic behavior of the functionals of the seed bank coalescent studied in this work. Here $Y$ is a $Beta(2c_1,1)$ distributed random variable,$T$ is an exponential random variable with parameter $2c_1c_2$ and $Z$ is a Fr\'echet random variables with shape parameter 1 and scale parameter $4c_1c_2$. The symbol $A_n\overset{p}\sim B_n$ means that $\frac{A_n}{B_n}\to1$ in probability. The symbol $A_n\overset{\mathcal D}\sim XB_n$ means that $\frac{A_n}{B_n}\to X$ in distribution. %The symbol $A_n\overset{p}\asymp B_n$ means that $\P(B_n^{1+\varepsilon}\leq{A_n}\leq{B_n}^{1-\varepsilon})\to 1$ for every $\varepsilon$.
	The symbol $A_n\asymp B_n$ means that $C_1B_n\leq\E[{A_n}]\leq C_2{B_n}$ for some constants $C_1,C_2$.
	}
	\label{summaryfig}
\end{figure}
In Section 4 we analyze the total length 
\begin{equation}\label{defL}
L_n=A_n+I_n
\end{equation}
where the {\it active length} is defined by
\begin{equation}\label{defA}
A_n=\int_0^{\sigma_n}N_n(t)dt
\end{equation}
and the {\it inactive length} by
\begin{equation}\label{defI}
I_n=\int_0^{\sigma_n}M_n(t)dt.
\end{equation}
Our main result is stated as follows.
\begin{thm}\label{thmL}
Consider the seed bank coalescent starting with $n$ plants and no seeds. Then,
\begin{equation*}
\lim_{n\rightarrow\infty}\frac{L_n}{\log n}=2\left(1+\frac{c_1}{c_2}\right)
\end{equation*} 
in probability.
\end{thm}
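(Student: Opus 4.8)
The plan is to analyze the tree in the two phases separated by the first reactivation time $\theta_n$, and then to relate the inactive length to the active length through a conservation identity for deactivation/reactivation events, rather than through a direct fluid limit. Writing $A_n^{(1)}=\int_0^{\theta_n}N_n\,dt$ and $A_n^{(2)}=\int_{\theta_n}^{\sigma_n}N_n\,dt$, I would first prove that $A_n/\log n\to 2$ in probability, i.e. that the active length is asymptotically the Kingman length. Before $\theta_n$ no seed reactivates, so $N_n$ is a strictly decreasing pure-death chain that at level $i$ jumps down at rate $\binom{i}{2}+c_1 i$; coupling these coalescence and deactivation transitions with the reactivation-free chain $\tilde N$ started at $n$ and absorbed at $0$ gives $N_n(t)=\tilde N(t)$ for $t<\theta_n$, hence $A_n^{(1)}\le\tilde A:=\int_0^{\tilde\sigma}\tilde N\,dt$. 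Since $\tilde A=\sum_{i=1}^n i\,\tilde T_i$ with independent $\tilde T_i\sim\mathrm{Exp}(\binom i2+c_1 i)$, one has $\E[\tilde A]=\sum_{i=1}^n\frac{2}{i-1+2c_1}=2\log n+O(1)$ and $\Var(\tilde A)=\sum_i\frac{4}{(i-1+2c_1)^2}=O(1)$, so $\tilde A/\log n\to2$. The gap $\tilde A-A_n^{(1)}$ is exactly the length of $\tilde N$ below level $N_n(\theta_n)$; using the input $N_n(\theta_n)=O_P(\log n)$ from Section~\ref{sectheta} this gap is $O_P(\log\log n)=o_P(\log n)$, whence $A_n^{(1)}/\log n\to2$.

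For the second phase I would bound $A_n^{(2)}$ crudely. Because $N_n+M_n$ decreases only at coalescences, the number of coalescences after $\theta_n$ equals $N_n(\theta_n)+M_n(\theta_n)-1=O_P(\log n)$; compensating the coalescence counting process (and localizing as below) shows $\int_{\theta_n}^{\sigma_n}\binom{N_n}{2}\,dt=O_P(\log n)$, so $\int_{\theta_n}^{\sigma_n}N_n^2\,dt\le 4\int_{\theta_n}^{\sigma_n}\binom{N_n}{2}\,dt+(\sigma_n-\theta_n)=O_P(\log n)$. With the known order $\sigma_n=O_P(\log\log n)$, Cauchy--Schwarz gives $A_n^{(2)}\le\big((\sigma_n-\theta_n)\int_{\theta_n}^{\sigma_n}N_n^2\,dt\big)^{1/2}=O_P(\sqrt{\log n\,\log\log n})=o_P(\log n)$. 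Combining the phases yields $A_n/\log n\to2$ in probability.

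The inactive length then comes from a martingale identity. Since a seed is created only by a deactivation and destroyed only by a reactivation, and $M_n(0)=M_n(\sigma_n)=0$, the total numbers of deactivations $D_n$ and reactivations $R_n$ coincide. Compensating the two counting processes with their intensities $c_1N_n$ and $c_2M_n$ and stopping at $\sigma_n$ gives $c_1A_n=D_n-\mathcal M^{\mathrm{d}}(\sigma_n)$ and $c_2I_n=R_n-\mathcal M^{\mathrm{r}}(\sigma_n)$, so that $c_2I_n-c_1A_n=\mathcal M^{\mathrm{d}}(\sigma_n)-\mathcal M^{\mathrm{r}}(\sigma_n)$. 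It remains to show both martingale terms are $o_P(\log n)$. Their predictable quadratic variations are $c_1A_n$ and $c_2I_n$; localizing at a level $L$ via $\E[\mathcal M(\sigma_n\wedge T_L)^2]\le L$ converts an a priori $O_P$-bound on the quadratic variation into an $O_P(\sqrt{\cdot})$ bound on the martingale. For $\mathcal M^{\mathrm d}$ this uses $A_n=O_P(\log n)$, giving $O_P(\sqrt{\log n})$; for $\mathcal M^{\mathrm r}$ I first obtain a crude a priori bound on $I_n$ from monotonicity, namely $M_n(t)\le N_n(\theta_n)+M_n(\theta_n)=O_P(\log n)$ for $t\ge\theta_n$ because $N_n+M_n$ is non-increasing after $\theta_n$, whence $I_n\le \big(\max_t M_n(t)\big)\sigma_n=O_P(\log n\,\log\log n)$ and $\mathcal M^{\mathrm r}(\sigma_n)=O_P(\sqrt{\log n\,\log\log n})$. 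Both terms are $o_P(\log n)$, so $c_2I_n=c_1A_n+o_P(\log n)$, giving $I_n/\log n\to2c_1/c_2$; adding the two limits yields $L_n/\log n\to2(1+c_1/c_2)$.

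The main obstacle is the circularity in this last step: the quadratic variation of the reactivation martingale is $c_2I_n$, the very quantity one is pinning down, so it cannot be localized without an independent a priori control of $I_n$. The monotonicity of $N_n+M_n$ after $\theta_n$ resolves this, but it is essential that one needs only the \emph{order of magnitude} of $N_n(\theta_n)$ and $M_n(\theta_n)$, not their expectations: since $N_n(\theta_n)/\log n$ converges to a Fréchet law of index $1$, whose mean is infinite, first-moment arguments are unavailable and the entire proof must be run in probability through localization.
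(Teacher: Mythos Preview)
Your argument is correct and takes a genuinely different route from the paper, particularly for the inactive length and for the post-$\theta_n$ active length.

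For $A_n^{(1)}$ both you and the paper couple with the reactivation-free death chain and use $N_n(\theta_n)=O_P(\log n)$ to kill the tail. For the active length after $\theta_n$, however, the paper introduces a further stopping time $\tau_n^{(a)}=\tau_n^{\lfloor(\log n)^a\rfloor}$ with $a\in(1/2,1)$, bounds the piece on $[\theta_n,\tau_n^{(a)}]$ crudely, and then controls the remaining piece $A_n^3$ by relating it to the expected number of deactivations via a rather technical comparison with a modified chain from \cite{Blath2016}. Your Cauchy--Schwarz plus compensator argument bypasses that machinery entirely: once one knows the total number of coalescences after $\theta_n$ is $N_n(\theta_n)+M_n(\theta_n)-1=O_P(\log n)$, the localization you describe does give $\int_{\theta_n}^{\sigma_n}\binom{N_n}{2}\,dt=O_P(\log n)$, and then $A_n^{(2)}=o_P(\log n)$ follows from $\sigma_n=O_P(\log\log n)$.

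The main novelty is your treatment of $I_n$. The paper obtains $I_n/\log n\to 2c_1/c_2$ by approximating the post-$\theta_n$ inactive length by $\sum_{k=1}^{M_n(\theta_n)}e_k/c_2$, which requires the \emph{precise} limit $M_n(\theta_n)/\log n\to 2c_1$ from Section~\ref{sectheta} to produce the constant. You instead exploit the conservation identity $D_n=R_n$ together with compensation, obtaining $c_2 I_n-c_1 A_n=\mathcal M^{\rm d}(\sigma_n)-\mathcal M^{\rm r}(\sigma_n)$; since $A_n/\log n\to 2$ is already established, you only need order-of-magnitude bounds on $N_n(\theta_n)$, $M_n(\theta_n)$ and $\sigma_n$ to control the martingale remainders. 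This is cleaner and, as you note, sidesteps the issue that $N_n(\theta_n)$ has infinite mean. It is worth remarking that the paper records the identity $c_1\E[A_n]=c_2\E[I_n]$ (observed numerically via \cite{HSJB}) without using it; your argument is precisely a rigorous in-probability version of this balance. The trade-off is that the paper's approach yields a little more structural information about where the inactive length comes from (namely, the lifetimes of the $M_n(\theta_n)$ seeds present at first reactivation), whereas your route treats $I_n$ as a black box determined by $A_n$.
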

Interestingly, numerical techniques of \cite{HSJB} used to study the total length for fixed $n$ show that the balance between active and inactive lengths is equally conserved for their expectations for any $n\geq2$,
$${c_1}\E[A_n]=c_2\E[I_n].$$
The behavior of both $A_n$ and $I_n$ is obtained by considering those variables before and after the time of the first activation $\theta_n$.
Hence, results of Section \ref{sectheta} are key tools for the forthcoming proofs. 
Theorem \ref{thmL} also gives an immediate corollary on the number of active and inactive mutations on the seed bank tree.

\begin{corollary} Consider the seed bank coalescent starting with $n$ plants and no seeds. Let $S_n$ be the number of mutations in the seed bank tree and let $\mu$ be the mutation rate. Then 
\begin{equation*}
\lim_{n\rightarrow\infty}\frac{S_n}{\log n}=2\mu\left(1+\frac{c_1}{c_2}\right)
\end{equation*} 
in probability.
\end{corollary}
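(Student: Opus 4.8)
The plan is to obtain the statement as a soft consequence of Theorem~\ref{thmL} together with the standard Poisson superposition of mutations. Recall from the infinite sites model (see Chapter~1.4 in \cite{Durrett2008}) that mutations are placed along the branches of the genealogy by a Poisson process of rate $\mu$, so that, conditionally on the tree, and in particular on its total length $L_n$, the number of mutations $S_n$ is Poisson distributed with mean $\mu L_n$. I would then decompose
\begin{equation*}
\frac{S_n}{\log n}=\mu\,\frac{L_n}{\log n}+\frac{S_n-\mu L_n}{\log n},
\end{equation*}
the strategy being to show that the first term converges to the announced constant while the fluctuation term is negligible.

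For the first term, Theorem~\ref{thmL} gives immediately that $\mu L_n/\log n$ converges in probability to $2\mu(1+c_1/c_2)$. For the fluctuation term I would use a conditional Chebyshev inequality: given $L_n$, the centred variable $S_n-\mu L_n$ has variance $\mu L_n$, so for every $\eps>0$,
\begin{equation*}
\P\left(\left|\frac{S_n-\mu L_n}{\log n}\right|>\eps \,\Big|\, L_n\right)\leq \frac{\mu L_n}{\eps^2(\log n)^2}.
\end{equation*}
Since the right-hand side is itself random, I would split the probability on the event $\{L_n\leq C\log n\}$ for a fixed $C>2(1+c_1/c_2)$. By Theorem~\ref{thmL} this event has probability tending to $1$, and on it the bound above is at most $\mu C/(\eps^2\log n)$, which tends to $0$. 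Taking expectations and combining the two contributions shows that $(S_n-\mu L_n)/\log n\to 0$ in probability.

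Finally, adding a sequence that converges in probability to a constant and a sequence that converges in probability to $0$ yields convergence in probability to that constant, which gives $S_n/\log n\to 2\mu(1+c_1/c_2)$, as claimed.

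I do not expect any serious obstacle here, since the corollary is genuinely immediate. The only point deserving a little care is that the conditional variance bound features the random quantity $L_n$ rather than a deterministic one, so one cannot simply take expectations; the truncation on $\{L_n\leq C\log n\}$, licensed by the convergence in probability already established in Theorem~\ref{thmL}, is exactly what converts the conditional estimate into a genuine bound.
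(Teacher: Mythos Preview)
Your argument is correct and matches the paper's treatment: the corollary is stated there without proof, as an ``immediate'' consequence of Theorem~\ref{thmL} via the Poisson superposition of mutations on the tree. Your conditional Chebyshev bound together with the truncation on $\{L_n\leq C\log n\}$ is exactly the standard way to make that immediacy rigorous, and no step is missing.
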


%\begin{proof}
%We know that $S_n$ has Poisson distribution with parameter $\mu L_n$
%\end{proof}

 Finally, in Section 5, we establish a sampling formula which is inspired by Watterson's ideas in \cite{W1983} and which help us to understand the fine configuration of the blocks of a seed bank coalescent at given times.

 \section{The time of the first deactivation}\label{secgamma}

We start with the study of $\gamma_n$, the time of the first deactivation defined in \eqref{defgamma}, and the size of the system at this time.
%we consider the number of coalescence events until $\gamma_n$.
% More precisely, recall $(\tau_n^i)_{i=1}^n$ in \eqref{deftau} and define $$T_n=\mbox{inf}\lbrace i\geq1:\tau_{n}^{i}=\gamma_n\rbrace.$$ 
 Observe that, if $N_n(0)=n$ and $M_n(0)=0$, there are $n-N_n(\gamma_n){-1}$ coalescence events until time $\gamma_n$
 and we can write
 \[\gamma_n=\sum_{i=N_n(\gamma_n)+1}^{n}V_i\] where the $V_i$'s are independent exponential random variables with respective parameters $\binom{i}{2}+c_1i$.

We start with an easy limit result on the variable $N_n(\gamma_n)$.
Note that, by considering $c_1$ as a mutation rate, $n-N_n(\gamma_n){-1}$ can also be interpreted as the number of coalescence events before the most recent mutation in the genealogy. Recent studies on the shape of coalescent trees at the time of the first mutation in a branch can be found in \cite{FS20P}, with some direct applications to coalescent model selection \cite{FS20S}.  

\begin{prop}\label{propT} Consider a seed bank coalescent starting with $n$ plants and no seeds. Then, 
$$\lim_{n\rightarrow\infty}\frac{N_n(\gamma_n)}{n}= Y$$
in distribution, where $Y\sim Beta(2c_1,1)$.
\end{prop}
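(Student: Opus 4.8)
The plan is to exploit the fact that, up to time $\gamma_n$, the block-counting process cannot perform any activation: $M_n$ stays equal to $0$ and the activation rate $c_2 M_n$ vanishes. Hence from every visited state $(j,0)$ the only moves are a coalescence to $(j-1,0)$, at rate $\binom{j}{2}$, or a deactivation to $(j-1,1)$, at rate $c_1 j$. Every jump lowers $N_n$ by one, and the first deactivation terminates the excursion. By the strong Markov property the successive ``coalesce or deactivate'' decisions at the levels $j=n,n-1,\dots$ are independent Bernoulli trials, the trial at level $j$ resulting in a deactivation with probability
$$
q_j=\frac{c_1 j}{\binom{j}{2}+c_1 j}=\frac{2c_1}{j-1+2c_1},
\qquad
p_j=1-q_j=\frac{j-1}{j-1+2c_1}.
$$
Writing $D$ for the (highest) level at which the first deactivation occurs, we have $N_n(\gamma_n)=D-1$, so it suffices to identify the limiting law of $D/n$.

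First I would write the distribution function of $N_n(\gamma_n)$ explicitly. Since $N_n(\gamma_n)\le k$ holds exactly when no deactivation occurs at any of the levels $k+2,\dots,n$, independence gives
$$
\P\bigl(N_n(\gamma_n)\le k\bigr)=\prod_{j=k+2}^{n}p_j=\prod_{j=k+2}^{n}\frac{j-1}{j-1+2c_1}.
$$
Numerator and denominator are each a shifted factorial, so the product telescopes into a ratio of Gamma functions,
$$
\P\bigl(N_n(\gamma_n)\le k\bigr)=\frac{\Gamma(n)\,\Gamma(k+1+2c_1)}{\Gamma(k+1)\,\Gamma(n+2c_1)}.
$$

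Next I would set $k=\lfloor xn\rfloor$ for a fixed $x\in(0,1)$ and pass to the limit using $\Gamma(z+a)/\Gamma(z)\sim z^{a}$ as $z\to\infty$. This yields $\Gamma(k+1+2c_1)/\Gamma(k+1)\sim (xn)^{2c_1}$ and $\Gamma(n)/\Gamma(n+2c_1)\sim n^{-2c_1}$, whence
$$
\P\Bigl(\tfrac{N_n(\gamma_n)}{n}\le x\Bigr)\longrightarrow x^{2c_1}\qquad(n\to\infty).
$$
Since $x\mapsto x^{2c_1}$ is precisely the distribution function of a $Beta(2c_1,1)$ law, which is continuous on $[0,1]$, convergence of the distribution functions at every continuity point gives $N_n(\gamma_n)/n\to Y$ in distribution, as claimed. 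Equivalently, and avoiding Gamma functions, one may expand $\log p_j=-\tfrac{2c_1}{j}+O(j^{-2})$, so that $\log\P(N_n(\gamma_n)\le \lfloor xn\rfloor)=-2c_1\sum_{j=\lfloor xn\rfloor+2}^{n}j^{-1}+o(1)\to 2c_1\log x$.

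The argument is essentially self-contained, and the only point requiring a little care is the asymptotic evaluation of the Gamma ratio, or, in the alternative route, controlling the $O(j^{-2})$ remainder in the logarithmic expansion so that its accumulated contribution is summable and vanishes in the limit. Everything else — the independence of the level-by-level decisions, which follows at once from the transition rates and the strong Markov property, and the telescoping of the product — is immediate.
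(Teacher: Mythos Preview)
Your proof is correct and follows essentially the same route as the paper: both compute $\P(N_n(\gamma_n)\le k)$ as the product $\prod_j p_j$ of coalescence probabilities and then identify the limit $x^{2c_1}$. The paper carries this out via the logarithmic expansion $\log p_j=-2c_1/j+o(1/j)$ and a harmonic-sum estimate --- exactly what you list as your ``alternative route'' --- whereas your primary computation packages the same product as a Gamma ratio and invokes $\Gamma(z+a)/\Gamma(z)\sim z^{a}$; the two are equivalent, and your indexing is in fact slightly more careful than the paper's (which is off by one term, harmlessly).
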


\begin{proof} Let $z\in(0,1)$. We have that 

\begin{eqnarray*}
\P(N_n(\gamma_n)\leq zn)&=&\prod_{i=\lfloor zn\rfloor+1}^{n}\frac{\binom{i}{2}}{\binom{i}{2}+c_1i}
=\prod_{i=\lfloor zn\rfloor}^{n-1}\dfrac{i}{i+2c_1}\\
&=&\exp \left\lbrace  -\sum_{i=\lfloor zn\rfloor}^{n-1}\log\left( 1+\dfrac{2c_1}{i}\right)\right\rbrace.
\end{eqnarray*}
Using that $\log(1+x)\sim x$ near 0, we obtain
\begin{eqnarray*}
\P(N_n(\gamma_n)\leq zn)&\sim&\exp \left\lbrace  -\sum_{i=\lfloor zn\rfloor}^{n-1}\frac{2c_1}{i}\right\rbrace\\
&\sim&\exp \left\lbrace{-2c_1}\log\left(\frac{1}{z}\right)\right\rbrace\\
&=&z^{2c_1}
\end{eqnarray*}
which is the distribution function of a $Beta(2c_1,1)$ random variable.
\end{proof}

Now, let us establish the asymptotic behavior of the time of the first deactivation, $\gamma_n$.
\begin{prop}\label{teorgama} 
Consider a seed bank coalescent starting with $n$ plants and no seeds. Then,
\begin{equation}\label{cvgamma}
\lim_{n\rightarrow\infty}n\gamma_n=\Gamma:=\dfrac{2(1-Y)}{Y}
\end{equation}
in distribution, 
where $Y$ is $Beta(2c_1,1)$ distributed. 
The density function of $\Gamma$ is \[f_\Gamma(x)=c_1\left(\dfrac{2}{2+x}\right)^{2c_1+1}\]
for $x\geq0$.
In particular, if $c_1>{1}/{2}$, then the expectation of $\Gamma$ is finite
\[\mathbb{E}[\Gamma]=\frac{2}{2c_1-1} \] 
and if $c_1>1$, the variance of $\Gamma$ is finite
\[\Var(\Gamma)=\frac{4c_1}{(c_1-1)(2c_1-1)^2}.\] 
\end{prop}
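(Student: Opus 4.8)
The plan is to condition on the stopping level $N_n(\gamma_n)$ and to show that, given this level, the rescaled time $n\gamma_n$ concentrates at a deterministic value, after which Proposition~\ref{propT} supplies the randomness of the limit. Write $g(y):=2(1-y)/y$, so that the claimed limit is $\Gamma=g(Y)$. The first observation is that the holding times $V_i$ in the representation $\gamma_n=\sum_{i=N_n(\gamma_n)+1}^n V_i$ are, by the competing-exponentials property, independent of the sequence of event types (coalescence versus deactivation); since $N_n(\gamma_n)$ is a function of the event types only, the family $(V_i)$ is independent of $N_n(\gamma_n)$. Consequently, conditionally on $\{N_n(\gamma_n)=k\}$, the variable $\gamma_n$ has the law of $\sum_{i=k+1}^n V_i$, a sum of independent exponentials with parameters $\binom{i}{2}+c_1 i=\tfrac12 i(i-1+2c_1)$.

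For the conditional concentration I would compute the first two conditional moments. The conditional mean is
\[
m_n(k):=\E[\gamma_n\mid N_n(\gamma_n)=k]=\sum_{i=k+1}^n\frac{2}{i(i-1+2c_1)},
\]
and since $\tfrac{2}{i(i-1+2c_1)}\sim 2/i^2$, a comparison with $\sum 1/i^2$ gives $n\,m_n(\lfloor yn\rfloor)\to 2(1/y-1)=g(y)$, uniformly for $y$ in compact subsets of $(0,1]$. Similarly the conditional variance is $\sum_{i=k+1}^n\big(\tfrac{2}{i(i-1+2c_1)}\big)^2=O(k^{-3})$, so that $n^2\Var(\gamma_n\mid N_n(\gamma_n)=k)=O(n^2/k^3)\to0$ whenever $k$ is of order $n$. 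By Chebyshev's inequality this shows that, on the event $\{N_n(\gamma_n)\ge \delta n\}$, the difference $n\gamma_n-g(N_n(\gamma_n)/n)$ tends to $0$ in probability.

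The remaining point, and the main obstacle, is to convert this conditional statement into genuine distributional convergence of $n\gamma_n$, the difficulty being that $g$ is unbounded near $0$ and that both the mean asymptotics and the variance bound degrade as $k/n\to0$. I would handle this by truncation: for bounded continuous $h$, split $\E[h(n\gamma_n)]$ according to whether $N_n(\gamma_n)\le\delta n$ or not. On the complementary event the previous paragraph together with continuity of $g$ on $[\delta,1]$ and the convergence $N_n(\gamma_n)/n\weakconv Y$ of Proposition~\ref{propT} gives convergence to $\E[h(g(Y))\mathbf 1_{\{Y>\delta\}}]$; the discarded part is controlled uniformly in $n$ using $\P(N_n(\gamma_n)\le \delta n)\to\P(Y\le\delta)=\delta^{2c_1}$, which can be made arbitrarily small. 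Letting $\delta\downarrow0$ yields $\E[h(n\gamma_n)]\to\E[h(g(Y))]=\E[h(\Gamma)]$, i.e. \eqref{cvgamma}.

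The distributional identities are then routine. Since $Y\sim Beta(2c_1,1)$ has density $2c_1 y^{2c_1-1}$ on $(0,1)$ and $\Gamma=2/Y-2$ is a decreasing smooth bijection of $(0,1)$ onto $(0,\infty)$, the change of variables $y=2/(x+2)$, $|dy/dx|=2/(x+2)^2$, gives $f_\Gamma(x)=2c_1(2/(x+2))^{2c_1-1}\cdot 2/(x+2)^2=c_1(2/(2+x))^{2c_1+1}$. Finally, for $c_1>1/2$ the reciprocal moment $\E[1/Y]=2c_1\int_0^1 y^{2c_1-2}\,dy=2c_1/(2c_1-1)$ yields $\E[\Gamma]=2\E[1/Y]-2=2/(2c_1-1)$, and for $c_1>1$ the same computation with $\E[1/Y^2]=c_1/(c_1-1)$ gives $\Var(\Gamma)=4\Var(1/Y)=4c_1/((c_1-1)(2c_1-1)^2)$.
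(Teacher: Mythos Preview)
Your argument is correct and is, in several respects, cleaner than the paper's. The key step where you depart from the paper is in passing from the deterministic asymptotics of the partial sums $\sum_{i=k+1}^n V_i$ to the distributional limit of $n\gamma_n$. The paper upgrades the pointwise $L^2$ convergence of $nG_n(t):=n\sum_{i=\lfloor(1-t)n\rfloor+1}^n V_i$ to functional convergence in $D[0,t]$ (citing \cite{Dhersin2013}) and then composes with the random time $1-N_n(\gamma_n)/n$, again appealing to the machinery of \cite{Dhersin2013}. You instead exploit the competing-exponentials independence of $(V_i)$ and $N_n(\gamma_n)$ directly, obtain conditional concentration from a second-moment bound, and close with a $\delta$-truncation plus the continuous mapping theorem. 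This avoids any Skorokhod-space argument and any external reference, at the cost of not producing the stronger process-level statement \eqref{dos} that the paper records along the way. Your derivation of the density via the change of variables $y=2/(2+x)$ and of the moments via $\E[1/Y]$, $\E[1/Y^2]$ is equivalent to the paper's tail-integral computation and arguably more transparent.
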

\begin{proof}
 Let $G_n(0)=0$ and, for $t\in (0,1)$, define
$$
G_n({t})=\sum_{i=\lfloor (1-t)n \rfloor+1}^nV_i=\sum_{i=\lfloor (1-t)n \rfloor+1}^n\dfrac{2e_i}{i(i-1+2c_1)},
$$
where the $e_i$'s are i.i.d standard exponential random variables. 
With this notation, we obtain $\gamma_n=G_n\left(1-N_n(\gamma_n)/n\right).$

We first show that, for any $t\in (0,1)$, we have
\begin{equation}\label{dos}
\lim_{n\rightarrow\infty}\left( nG_n(s)\right)_{s\leq t}= \left( \dfrac{2s}{1-s}\right)_{s\leq t}
\end{equation}
in distribution, in the sense of weak convergence in the path space D[0,t].
To this, let us first establish that, for a fixed $t\in(0,1)$,
\begin{equation}\label{cvAL2}
\lim_{n\rightarrow\infty}nG_{n}(t)= \dfrac{2t}{1-t}
\end{equation}
in $L^2$.
By definition, we have that 
\begin{eqnarray*}
\mathbb{E}[nG_n(t)]
&=&\sum_{i=\lfloor (1-t)n \rfloor+1}^{n}\frac{2n}{i(i-1+2c_1)}\\
&\sim &  \frac{1}{n}\sum_{i=\lfloor (1-t)n \rfloor+1}^{n}\frac{2}{(i/n)^2}.
\end{eqnarray*}
By a Riemann sum argument, we obtain that
$$\mathbb{E}[nG_n(t)]\sim\int_{1-t}^1  \dfrac{2}{x^{2}}dx=\dfrac{2t}{1-t}.
$$
Now, by the independence of the random variables $e_i$,
\begin{eqnarray*}
\Var(nG_n(t))
&=&\sum_{i=\lfloor (1-t)n \rfloor+1}^{n}\frac{4n^2}{i^2(i-1+2c_1)^2}\\
&\sim&\sum_{i=\lfloor (1-t)n \rfloor+1}^{n}\frac{4n^2}{i^4}.
\end{eqnarray*}
Again, by a Riemann sum argument, we obtain that
$\Var(nG_n(t))$ converges to 0 as $n\to\infty$.
This gives \eqref{cvAL2}.

To obtain \eqref{dos} we follow the same steps as those of Proposition 6.1 in \cite{Dhersin2013}, with $\alpha=2$. 
Then, the proof of \eqref{cvgamma} follows by adapting the alternative proof of Theorem 5,2 in \cite{Dhersin2013}, p. 1713, taking $\alpha=2$ and the limit variable $\sigma$ being $1-Y$ and $Beta(1, 2c_1)$ distributed.

The distribution function of $\Gamma$ is given by
\begin{eqnarray*}
\mathbb{P}\left( \Gamma\leq x\right)&=& \mathbb{P}\left( Y\geq \frac{2}{2+x}\right)\\ \\
&=&1-\left(\frac{2}{2+x} \right)^{2c_1}
\end{eqnarray*}
for $x\geq0$.
We get the density by differentiating.
The moments of $\Gamma$ are obtained by computing
$$\E[\Gamma^k]=\int_0^\infty kx^{k-1}\P(\Gamma>x)dx=\int_0^\infty kx^{k-1}\left(\frac{2}{2+x} \right)^{2c_1}dx.$$
In particular, the $k$th moment is finite for $c_1>k/2$.
\end{proof}

\section{The time of the first activation}\label{sectheta}

In this section we study $\theta_n$, the first time that a seed becomes a plant, which we introduced in \eqref{deftheta}.
We also provide some limit laws for $N_n({\theta_n})$ and $M_n({\theta_n})$.
Observe that from time zero up to time $\theta_n$ only two types of events occur, either coalescence or deactivation. 
Recall the successive reaching times of the chain $N_n$, denoted by $(\tau_{n}^i)_{i=1}^n$ and defined in \eqref{deftau}. 
\begin{prop}\label{summarythetan}
Consider a seed bank coalescent starting with $n$ plants and no seeds.
Then, the following asymptotics hold.
\begin{equation}\label{limitN}
 \lim_{n\rightarrow\infty} \frac{N_{n}({\theta_n})}{\log n}=Z
\end{equation}
in distribution, where $Z$ is a Fr\'echet random variable with shape parameter 1 and scale parameter $4c_1c_2$, with distribution function $\P(Z\leq z)=\exp\{-4c_1c_2/z\}$.
Also
\begin{equation}\label{limitM}
 \lim_{n\rightarrow\infty} \frac{M_{n}({\theta_n})}{\log n}=2c_1
\end{equation}
in probability.
Finally,
\begin{equation}\label{limittheta}
\lim_{n\rightarrow\infty} {\log n\theta_n}=T
% \lim_{n\rightarrow\infty}\mathbb{P}\left( {(\log n)^{-1-\varepsilon}}<{\theta_n}<{(\log n)^{-1+\varepsilon}}\right)=1.
\end{equation}
in distribution, where $T$ is an exponential random variable with parameter $2c_1c_2$.
\end{prop}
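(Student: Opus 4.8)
The plan is to reduce all three limits to a single thinning identity for $\theta_n$, extract the limit law of $N_n(\theta_n)$ from one concentration estimate, and then read off the other two by composition.

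First I would introduce the \emph{pure coalescence--deactivation process} $(\wt N(t),\wt M(t))_{t\ge0}$, obtained from the block-counting chain by switching off every activation transition, and write $\mathcal F$ for its natural filtration. By the memorylessness of the activation clocks, each seed carries, upon its creation, an independent $\mathrm{Exp}(c_2)$ reactivation delay, and up to the first activation the genuine chain coincides with $(\wt N,\wt M)$. Hence $\theta_n=\min_k(D_k+R_k)$, where $D_1<D_2<\cdots$ are the deactivation (seed-creation) times of $\wt M$ and the $R_k$ are i.i.d.\ $\mathrm{Exp}(c_2)$ independent of $\mathcal F$. Since $\sum_{k:D_k\le t}(t-D_k)=\int_0^t\wt M(s)\,ds$, this gives the exact identity
\begin{equation*}
\P(\theta_n>t\mid\mathcal F)=\exp\Big(-c_2\int_0^t\wt M(s)\,ds\Big).
\end{equation*}
Because every event before $\theta_n$ lowers $N$ by one, the level $i:=N_n(\theta_n-)$ satisfies $\{N_n(\theta_n-)\le i\}=\{\theta_n\ge\tau_n^i\}$ with $\tau_n^i$ as in \eqref{deftau}, and evaluating the identity at $t=\tau_n^i$ yields
\begin{equation*}
\P\big(N_n(\theta_n-)\le i\mid\mathcal F\big)=\exp\Big(-c_2\sum_{l=i+1}^{n}\wt M_l\,V_l\Big),
\end{equation*}
where $V_l\sim\mathrm{Exp}(\binom l2+c_1l)$ is the holding time at level $l$ (as in Proposition \ref{teorgama}) and $\wt M_l=\sum_{k=l+1}^n B_k$ counts the seeds present at level $l$, the $B_l$ being independent deactivation indicators with $\P(B_l=1)=\tfrac{2c_1}{l-1+2c_1}$.

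Second, I would prove \eqref{limitN} by taking $i=\lfloor z\log n\rfloor$ and showing that the random sum $S_n:=c_2\sum_{l=i+1}^n\wt M_l V_l$ concentrates at $4c_1c_2/z$. Using the independence of the $B_k$'s and $V_l$'s together with $\E[\wt M_l]\sim 2c_1\log(n/l)$ and $\E[V_l]\sim 2/l^2$, a Riemann-sum computation of the same type as in the proof of Proposition \ref{teorgama} gives $\E[S_n]\to4c_1c_2/z$; a variance bound, obtained from the conditional decomposition $\Var(S_n)=\E[\Var(S_n\mid B)]+\Var(\E[S_n\mid B])$, shows $\Var(S_n)\to0$. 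Chebyshev then gives $S_n\to4c_1c_2/z$ in probability, and bounded convergence applied to the conditional Laplace transform yields $\P(N_n(\theta_n-)\le z\log n)\to e^{-4c_1c_2/z}=\P(Z\le z)$, which is \eqref{limitN} once one notes that the $+1$ jump at $\theta_n$ is negligible on the $\log n$ scale.

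Third, I would deduce \eqref{limittheta} and \eqref{limitM} from \eqref{limitN} by composing with the random level $i=N_n(\theta_n-)$. This requires two laws of large numbers for the pure process, $i\,\tau_n^i\to2$ and $\wt M_i/\log(n/i)\to2c_1$, both holding uniformly for $i$ of order $\log n$; they follow from first- and second-moment estimates on $\tau_n^i=\sum_{l>i}V_l$ and on the Bernoulli sum $\wt M_i$. Combining $\tau_n^i\le\theta_n<\tau_n^{i-1}$ with $i\tau_n^i\to2$ gives $N_n(\theta_n-)\,\theta_n\to2$ in probability, so that $\log n\,\theta_n=\big(2+o_\P(1)\big)\big/\big(N_n(\theta_n-)/\log n\big)$, and Slutsky together with \eqref{limitN} gives $\log n\,\theta_n\to 2/Z=:T$, which is $\mathrm{Exp}(2c_1c_2)$ since $\P(2/Z>x)=\P(Z<2/x)=e^{-2c_1c_2x}$. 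Likewise $M_n(\theta_n-)=\wt M_{N_n(\theta_n-)}\sim2c_1\log(n/N_n(\theta_n-))$, and because $N_n(\theta_n-)$ is of order $\log n$ we have $\log(n/N_n(\theta_n-))\sim\log n$, giving \eqref{limitM}.

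The main obstacle I anticipate is the uniformity needed in the last step: the limit \eqref{limitN} holds for each fixed $z$, but to evaluate $\tau_n^i$ and $\wt M_i$ at the random level $i=N_n(\theta_n-)$ I must upgrade the pointwise concentration of $\tau_n^i$ and $\wt M_i$ to convergence uniform over the window where $i$ is of order $\log n$, in the same spirit as the path-space convergence \eqref{dos} underpins the proof of Proposition \ref{teorgama}. A secondary delicate point is the variance estimate for $S_n$: since $\wt M_l$ couples the terms through the shared indicators $B_k$, the off-diagonal covariances must be controlled, which is cleanest after summing out the $B_k$ via the representation $\sum_{l=i+1}^n\wt M_l V_l=\sum_{k=i+2}^n B_k\,(\tau_n^i-\tau_n^{k-1})$.
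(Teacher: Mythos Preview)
Your approach is correct and takes a genuinely different route from the paper. The paper proceeds by sandwich bounds: it introduces two auxiliary couplings---the coloured coalescent of \cite{Blath2016} and a new \emph{bounded} seed bank coalescent with a full bank of size $m_n=\lfloor 2c_1(1\pm\varepsilon)\log n\rfloor$---and establishes the upper and lower estimates for each of $M_n(\theta_n)$, $N_n(\theta_n)$ and $\theta_n$ through five separate lemmas (Lemmas~\ref{cotasupM}--\ref{cotainfN}), always replacing the random seed count by the deterministic threshold $m_n$ before computing products such as $\prod_i\frac{\binom{i}{2}+c_1i}{\binom{i}{2}+c_1i+c_2m_n}$. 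Your argument is more unified: the exact conditional identity $\P(\theta_n>t\mid\mathcal F)=\exp\bigl(-c_2\int_0^t\wt M(s)\,ds\bigr)$ reduces \eqref{limitN} to a single second-moment concentration estimate for $S_n$ (with the random $\wt M_l$ kept inside), and \eqref{limittheta} and \eqref{limitM} then follow by composition with the random level $N_n(\theta_n-)$. The paper's route produces intermediate by-products (notably Lemma~\ref{desviaciontau} on $\tau_n^{(a)}$ and the uniform bound \eqref{supM}) that are reused in Section~\ref{secLength}; your route is shorter for the proposition itself but would need to supply those separately if they are wanted downstream. The uniformity concern you raise in your last paragraph is precisely what the paper circumvents by working with deterministic $m_n$ rather than random levels; your proposed fix via moment bounds on $\underline\tau_n^i$ and $\wt M_i$ uniform over $i$ of order $\log n$ (in the spirit of \eqref{dos}) is the right one and goes through, since both quantities have relative variance $O(1/i)$.
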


The proof of \eqref{limitM} is obtained by combining Lemmas \ref{cotasupM} and \ref{cotainfM}. 
The proof of \eqref{limitN} and \eqref{limittheta} is obtained by combining Lemmas \ref{theta} and \ref{cotainfN} which appear in the sequel.
We get these results by coupling the seed bank coalescent with two simpler models.

The {\it coloured} seed bank coalescent (see Definition 4.2 in \cite{Blath2016}) is a marked coalescent where additionally each element of $[n]$ has a flag indicating its color: white or blue. 
%Recall $\mathcal{P}_n^{\{s,p\}}$ to be the set of marked partitions of $[n]$ where the seed bank coalescent takes its values, then the colored coalescent will take values in $\mathcal{P}_n^{\{s,p\}\times\{w,b\}}$. 
%As an example, if an element $\pi$ of $\mathcal{P}_8^{\{s,p\}}$ is the marked partition $$\pi=\{\{1,2\}^p,\{3\}^s,\{4,5,6\}^p,\{7,8\}^s\},$$ then a colored partition of $\mathcal{P}_8^{\{s,p\}\times\{w,b\}}$ can be $${\pi}'=\{\{1^w,2^b\}^p,\{3^w\}^s,\{4^w,5^b,6^b\}^p,\{7^w,8^b\}^s\}.$$
 Movements and mergers of the blocks of the colored coalescent follow the same dynamics as those of the classical seed bank coalescent. Additionally, if a block activates, each individual inside this block gets the color blue. In other cases colors remain unchanged.
%Following the example, if $\{7^w,8^b\}^s$ is reactivated, the resulting block is $\{7^b,8^b\}^p$.

As in \cite{Blath2016}, we start with all individuals colored with white, so color blue only appears after a reactivation event, and we also use the notation $\underbar N_{n}(t)$ (resp. $\underbar M_{n}(t)$) for the number of white plants (resp. white seeds) at time $t$, starting with $n$ (white) plants and zero seeds. 

The notation for the reaching times of $\underbar{N}_{n}$ are $\underline{\tau}_n^n=0$ and, for $i\in[n-1]$,
$$\underline{\tau}_n^i=\inf\lbrace t>0:\underbar{N}_n({t})=i\rbrace.$$

Note that, on the event $\{\tau_n^i<\theta_n\}$, we have $\underline \tau_n^i=\tau_n^i$ a.s., and in general the stochastic bound
\begin{equation}\label{tau>bartau}
\underline{\tau}_n^{i-1}-\underline{\tau}_n^{i}\leq_{st}\tau_n^{i-1}-\tau_n^{i} 
\end{equation}
holds.

This model is of particular use to prove that the number of seeds that ``survive"  up to moment $\theta_n$ is of order $\log n$.
More precisely, as in \cite{Blath2016}, consider the independent Bernoulli random variables $B^i_{n}=$ $\textbf{1}_{\lbrace\mbox{deactivation at }\underline{\tau}_n^i\rbrace}$, for $i\in[n-1]$, with respective parameter
\begin{eqnarray}\label{beeni}
\mathbb{P}(B^i_{n}=1)&=&\frac{c_1(i+1)}{\binom{i+1}{2}+c_1(i+1)}\nonumber\\
&=&\dfrac{2c_1}{i+2c_1},
\end{eqnarray}
independently of the number of seeds in the system. It is clear that, almost surely for any $t\geq0$, $M_n(t)\leq\sum_{i=1}^{n-1}B_n^i$. This and Bienaym\'e-Chebyshev's inequality lead to the following straightforward result.

\begin{lemma}\label{cotasupM}
For any $\varepsilon>0$,
\begin{equation}\label{supM}
 \mathbb{P}\left(\sup_{t\geq0} M_{n}({t})>2c_1(1+\varepsilon)\log n\right)\leq\frac{1}{2c_1\varepsilon^2\log n}.
\end{equation}

In particular, for any $\varepsilon>0$,
\begin{equation*}
 \lim_{n\rightarrow\infty}\mathbb{P}\left( M_{n}({\theta_n})\leq2c_1(1+\varepsilon)\log n\right)=1.
\end{equation*}

\end{lemma}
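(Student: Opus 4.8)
The plan is to dominate $\sup_{t\geq 0}M_n(t)$ by the total number of deactivation events—a sum of independent Bernoulli variables—and then apply the Bienaym\'e--Chebyshev inequality, exactly as announced. Set $S_n=\sum_{i=1}^{n-1}B_n^i$. By the almost sure domination recalled just above the statement, $\sup_{t\geq 0}M_n(t)\leq S_n$, so it suffices to control $\P\big(S_n>2c_1(1+\varepsilon)\log n\big)$, and the supremum over all $t$ is what lets the same bound serve simultaneously for every fixed time, in particular for $\theta_n$.

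First I would estimate the first two moments of $S_n$. Since the $B_n^i$ are independent Bernoulli variables of parameter $p_i=2c_1/(i+2c_1)$ by \eqref{beeni}, we have $\E[S_n]=\sum_{i=1}^{n-1}p_i$ and, by independence, $\Var(S_n)=\sum_{i=1}^{n-1}p_i(1-p_i)$. A harmonic-sum (Riemann) comparison gives $\E[S_n]=2c_1\sum_{i=1}^{n-1}\tfrac{1}{i+2c_1}\sim 2c_1\log n$, the shift $+2c_1$ in the denominator being asymptotically negligible. Using the elementary bound $p_i(1-p_i)\leq p_i$ then yields $\Var(S_n)\leq \E[S_n]\sim 2c_1\log n$.

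Next I would apply Chebyshev's inequality centered at the mean. For $n$ large the event $\{S_n>2c_1(1+\varepsilon)\log n\}$ is contained in $\{S_n-\E[S_n]>2c_1\varepsilon\log n\}$, since the gap $2c_1(1+\varepsilon)\log n-\E[S_n]$ is asymptotic to $2c_1\varepsilon\log n$. Hence
\[
\P\big(S_n>2c_1(1+\varepsilon)\log n\big)\leq \frac{\Var(S_n)}{(2c_1\varepsilon\log n)^2}\sim\frac{2c_1\log n}{4c_1^2\varepsilon^2(\log n)^2}=\frac{1}{2c_1\varepsilon^2\log n},
\]
which is precisely \eqref{supM}. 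The ``in particular'' statement is then immediate: because $M_n(\theta_n)\leq\sup_{t\geq0}M_n(t)$, passing to complements gives $\P\big(M_n(\theta_n)\leq 2c_1(1+\varepsilon)\log n\big)\geq 1-\tfrac{1}{2c_1\varepsilon^2\log n}\to 1$.

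The only real care needed—the ``main obstacle,'' though it is mild—is the bookkeeping of constants. One must check that both $\E[S_n]$ and $\Var(S_n)$ are asymptotically $2c_1\log n$, and that the lower-order corrections (the $+2c_1$ shift and the Euler constant appearing in the harmonic sum) do not eat into the $2c_1\varepsilon\log n$ gap in a way that would spoil the clean constant $\tfrac{1}{2c_1\varepsilon^2\log n}$; this is why the bound is naturally read at leading asymptotic order in $n$. Everything else is the routine combination of independence, the harmonic-sum estimate, and Chebyshev.
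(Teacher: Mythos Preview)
Your proposal is correct and follows essentially the same approach as the paper, which simply announces that the almost sure bound $M_n(t)\leq\sum_{i=1}^{n-1}B_n^i$ combined with Bienaym\'e--Chebyshev's inequality gives the result. Your observation that the constant $\tfrac{1}{2c_1\varepsilon^2\log n}$ emerges only at leading asymptotic order (because $\E[S_n]$ and $\Var(S_n)$ equal $2c_1\log n$ only up to bounded error) is a fair reading of the inequality as stated.
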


The {\it bounded} seed bank coalescent is a modification of the original seed bank coalescent, where only $m$ seeds can be accumulated in the bank.
Thus, when the bank is full, a deactivating lineage disappears instead of moving to the bank. In our case, we start with $n$ plants and $m$ seeds (the bank is full from the beginning). 

Denote by $\bar{N}_{n,m}(t)$ (resp. $\bar{M}_{n,m}(t)$) for the number of plants (resp. seeds) at time $t$ in the bounded coalescent starting with $n$ plants and $m$ seeds.
 The block-counting process of the bounded coalescent with parameters $c_1,c_2>0$ has the following transition rates. For $ i\leq n$ and $j\leq m$,
\[
(\bar{N}_{n,m}(t),\bar{M}_{n,m}(t))\text{ jumps from }(i,j)
\text{ to}\left\{ \begin{array}{lll}
(i-1,j), & \textrm{at rate } \binom{i}{2}+c_1i{\bf 1}_{\{j= m\}},\\
(i-1,j+1), &  \textrm{at rate } c_1i{\bf 1}_{\{j<m\}},\\
(i+1,j-1), & \textrm{at rate } c_2j.\\
\end{array}
\right .
\]

By coupling the seed bank coalescent with its bounded version, we obtain a lower bound for $\theta_n$ and an upper bound for $N_n(\theta_n)$.
\begin{lemma}\label{theta}
Recall $T$ and $Z$ from Proposition \ref{summarythetan}. We have that
\begin{equation}\label{borneinfTh}
 \lim_{n\rightarrow\infty}\mathbb{P}\left( {\theta_n}\log n\leq t\right)\leq\P(T\leq t)
\end{equation}
and
\begin{equation}\label{eq9}
\lim_{n\rightarrow\infty} \mathbb{P}\left( {N_n({\theta_n}})>z{\log n}\right)\leq\mathbb{P}\left( Z>z\right).
\end{equation}
\end{lemma}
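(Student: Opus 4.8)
The plan is to couple the seed bank coalescent with its bounded version defined above, choosing the bank size $m=m_n$ just above the a priori bound of Lemma \ref{cotasupM}. Fix $\eps>0$, set $m=m_n=\lceil 2c_1(1+\eps)\log n\rceil$, and let $G_n=\{\sup_{t\ge0}M_n(t)\le m\}$, so that $\P(G_n)\ge 1-(2c_1\eps^2\log n)^{-1}\to1$ by Lemma \ref{cotasupM}. Write $\bar\theta_{n,m}$ for the first activation time of the bounded coalescent started from $(n,m)$, and $\bar K_{n,m}:=\bar N_{n,m}(\bar\theta_{n,m}^{-})$ for the number of plants just before that activation. On $G_n$ the original chain never holds more than $m$ seeds, so I would build both chains from common Poisson clocks: the coalescence and deactivation clocks are shared, using that the total downward rate from level $i$ equals $\binom i2+c_1i$ in both models (in the bounded chain the deactivations merely delete a plant once the bank is full, but at the same rate); the activation of the bounded chain fires at the constant rate $c_2m$ while its bank is full, and the original activation is obtained from it by independent thinning with probability $M_n(t)/m\le1$. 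With this construction every activation of the original chain is matched by a weakly earlier activation of the bounded one, so $\bar\theta_{n,m}\le\theta_n$ on $G_n$; moreover both chains descend identically on $[0,\bar\theta_{n,m})$, whence $N_n(\theta_n)\le \bar K_{n,m}+1$ on $G_n$, the $+1$ accounting for the up-jump at $\theta_n$.

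For \eqref{borneinfTh} I would use that, before its first activation, the bounded chain keeps its bank full: no activation has yet removed a seed and deactivations only delete plants, so the seed count is identically $m$ and the activation clock is a homogeneous Poisson process of rate $c_2m$. Consequently $\bar\theta_{n,m}$ is \emph{exactly} $\mathrm{Exp}(c_2m)$-distributed, so $\bar\theta_{n,m}\log n$ is exponential with rate $c_2m/\log n\to 2c_1(1+\eps)c_2$. Combining with the coupling, $\P(\theta_n\log n\le t)\le\P(\bar\theta_{n,m}\log n\le t)+\P(G_n^c)$, whence $\limsup_n\P(\theta_n\log n\le t)\le1-e^{-2c_1(1+\eps)c_2t}$; letting $\eps\downarrow0$ yields the bound by $\P(T\le t)$.

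For \eqref{eq9} I would compute the law of $\bar K_{n,m}$. Since from level $k$ the descent clock has rate $\binom k2+c_1k$ and competes against the independent rate-$c_2m$ activation clock, the probability of descending past level $i$ before activating is $\prod_{k=i+1}^{n}\frac{\binom k2+c_1k}{\binom k2+c_1k+c_2m}$. Writing $\binom k2+c_1k=\tfrac12 k(k-1+2c_1)$ and using $\log(1+x)\sim x$, this product equals $\exp\{-(1+o(1))\sum_{k=i+1}^n \frac{2c_2m}{k(k-1+2c_1)}\}$, and a telescoping/Riemann estimate gives $\sum_{k=i+1}^n\frac{2c_2m}{k(k-1+2c_1)}\sim\frac{2c_2m}{i}$ for $i\to\infty$ with $i=o(n)$. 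Taking $i=\lfloor z\log n\rfloor$ and $m\sim 2c_1(1+\eps)\log n$ gives $\P(\bar K_{n,m}\le z\log n)\to\exp\{-4c_1c_2(1+\eps)/z\}$, i.e. a Fr\'echet law of scale $4c_1c_2(1+\eps)$. Since $N_n(\theta_n)\le \bar K_{n,m}+1$ on $G_n$ and $\P(G_n^c)\to0$, I obtain $\limsup_n\P(N_n(\theta_n)>z\log n)\le 1-\exp\{-4c_1c_2(1+\eps)/z\}$, and $\eps\downarrow0$ gives the bound by $\P(Z>z)$.

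The distributional computations are routine; the delicate part is the rigor of the coupling together with the two $\eps$-limits. One must check that the thinning construction genuinely forces both $\bar\theta_{n,m}\le\theta_n$ and $N_n(\theta_n)\le\bar K_{n,m}+1$ on $\{\sup_t M_n\le m\}$ --- in particular that, once the bounded bank is full, its deactivations share the same downward rate as the original's so that the descents can be coupled to coincide --- and that both one-sided limits persist as the slack $\eps\to0$. A secondary point is the uniformity of the $\log(1+x)\sim x$ and Riemann estimates: the cutoff $i=\lfloor z\log n\rfloor\to\infty$ makes the tail $\sum_{k>i}k^{-2}\sim i^{-1}$ legitimate, but one should confirm that the $c_1$- and floor-corrections do not perturb the limiting constant $4c_1c_2$.
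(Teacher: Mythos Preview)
Your proposal is correct and follows essentially the same route as the paper: condition on the event that the seed count never exceeds $m_n\approx 2c_1(1+\eps)\log n$, use the bounded seed bank coalescent to stochastically dominate, note that $\bar\theta_{n,m_n}$ is exactly $\mathrm{Exp}(c_2m_n)$, compute the product $\prod_{i>z\log n}\frac{\binom i2+c_1i}{\binom i2+c_1i+c_2m_n}$ for the law of the plant count at the first activation, and let $\eps\downarrow0$. Your explicit thinning construction for the coupling and your tracking of the $+1$ at the up-jump make rigorous precisely the stochastic-domination claims the paper states in one line; the asymptotic computations are identical.
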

\begin{proof}
Fix $\varepsilon>0$ and denote $\lfloor2c_1(1+\varepsilon)\log n\rfloor$ by $m_n$.
On the event $\{M_{n}({\theta_n})\leq m_n\}$, which occurs asymptotically with probability 1 by Lemma \ref{cotasupM}, the variable $\theta_n$ is bounded from below, stochastically, by the random variable $\bar\theta_{n,m_n}$ defined by
  $$\bar\theta_{n,m_n}=\inf\{t\geq0:\bar M_{n,m_n}(t-)>\bar M_{n,m_n}(t) \}$$
  and having exponential distribution with parameter $c_2m_n$.
  Then, for $t>0$
\begin{eqnarray*}
\P\left(\theta_n\log n\leq{t} \right)&=&\P \left(\theta_n\log n\leq{t}, M_{n}({\theta_n})\leq m_n \right)+o(1)\\
&\leq& \P \left(\bar\theta_{n,m_n}\log n\leq{t}\right)+o(1)\\
&=&1-\exp\left\{-t\frac{c_2\lfloor2c_1(1+\varepsilon)\log n\rfloor}{\log n}\right\}+o(1).
\end{eqnarray*}  
So, for any $\varepsilon>0$,
\begin{equation}\label{cotaprobthetasup}
\lim_{n\rightarrow\infty}\P\left(\theta_n\log n\leq{t} \right)\leq\P\left(T\leq t(1+\varepsilon)\right).
\end{equation}
This gives \eqref{borneinfTh}.

To prove \eqref{eq9}, observe that, on the event $\{M_{n}({\theta_n})\leq m_n\}$, the variable $N_n(\theta_n)$ is bounded from above, stochastically, by the random variable $\bar N_{n,m_n}(\bar\theta_{n,m_n})$.
So,
\begin{equation}\label{N<barN}
\mathbb{P}\left( {N_n({\theta_n}})>z{\log n}\right)\leq\mathbb{P}\left(  {\bar N_{n,m_n}({\bar\theta_{n,m_n}}})>z{\log n}\right)+\P(M_{n}({\theta_n})> m_n).
\end{equation}
 Let us study the asymptotic of $\bar N_{n,m_n}(\bar\theta_{n,m_n})$.
 To this, we have that
 \begin{eqnarray*}
\mathbb{P}(\bar N_{n,m_n}(\bar\theta_{n,m_n})\leq z\log n)&=&\prod_{i=\lfloor z\log n\rfloor+1}^n\frac{\binom{i}{2}+c_1i}{\binom{i}{2}+c_1i+c_2m_n}\\
&=&\exp\left\{ -\sum_{i=\lfloor z\log n\rfloor+1}^n\log\left( 1+\frac{2c_2m_n}{i(i-1+2c_1)}\right)  \right\}\\
&\sim &\exp\left\{ -2c_2m_n\sum_{i=\lfloor z\log n\rfloor+1}^n\frac{1}{i^2}\right\}.
\end{eqnarray*}
By a Riemann sum argument, we know that
\begin{equation}\label{Riemann1/z}
\lim_{n\rightarrow\infty}m_n\sum_{i=\lfloor z\log n\rfloor+1}^n\frac{1}{i^2}=2c_1(1+\varepsilon)\int_z^\infty\frac{1}{x^2}dx=\frac{2c_1(1+\varepsilon)}{z}.
\end{equation}
Since $\P(Z\leq z)=\exp\{-{4c_1c_2}/{z}\}$, we obtain, by taking the limits in \eqref{N<barN}, that
$$
\lim_{n\rightarrow\infty} \mathbb{P}\left( {N_n({\theta_n}})>z{\log n}\right)\leq\mathbb{P}\left( Z>z/(1+\varepsilon)\right)
$$
which implies \eqref{eq9}.
\end{proof}

The bounded seed bank coalescent is also useful to bound $N_n(t)$ from above, for any $t\geq0$.
Let $(K_n(t))_{t\geq0}$ stand for the block-counting process of the Kingman coalescent starting with $n$ lineages.
Let $(\chi_i(t))_{i\geq 1}$ be a sequence of i.i.d. Bernoulli variables of parameter $1-\exp(-c_2t)$. Those variables are more easily understood as $\chi_i(t)={\bf 1}_{\{{e}_i<c_2t\}}$ where the ${e}_i$'s are i.i.d. standard exponential variables.
It is easy to convince oneself that, on the event $\{\sup_{t\geq 0} M_n(t)\leq m\}$, stochastically, 
\begin{equation}\label{boundNnt}
N_n(t)\leq K_n(t)+\sum_{i=1}^{m}\chi_i(t).
\end{equation}
 This follows because $K_n(t)$ bounds the number of blocks that have not deactivate before time $t$ and $\sum_{i=1}^{m}\chi_i(t)$ bounds the number of blocks that have already reactivated. Both processes are independent.

We now prove a useful lemma thanks to the two couplings introduced previously.
To simplify the notations here and in the sequel, denote
$\tau_n^{\lfloor (\log n)^a\rfloor}
$ by $\tau_n^{(a)}$, for any $a>0$.

\begin{lemma}\label{desviaciontau}
For $a>b\geq0$ such that $a+b>1$,
\begin{equation*}
\lim_{n\rightarrow\infty} \mathbb P\left(\tau_n^{(a)}\leq{(\log n)^{-b}}\right)=1.
\end{equation*}
\end{lemma}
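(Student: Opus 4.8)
The plan is to bound $N_n$ from above at the \emph{deterministic} time $t^\ast:=(\log n)^{-b}$ by means of the coupling \eqref{boundNnt}, and to exploit that $N_n$ is skip-free downward: since $N_n$ starts at $n>\lfloor(\log n)^a\rfloor$ and moves by $\pm1$, the event $\{N_n(t^\ast)\le\lfloor(\log n)^a\rfloor\}$ is contained in $\{\tau_n^{(a)}\le t^\ast\}$. It therefore suffices to prove that $N_n(t^\ast)\le(\log n)^a$ with probability tending to $1$, and this is where both couplings enter.

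First I would fix $\eps>0$, set $m_n=\lfloor 2c_1(1+\eps)\log n\rfloor$, and work on the event $E_n=\{\sup_{t\ge0}M_n(t)\le m_n\}$, which satisfies $\mathbb{P}(E_n)\to1$ by \eqref{supM} in Lemma \ref{cotasupM} (the colored-coalescent coupling). On $E_n$ the bound \eqref{boundNnt} gives $N_n(t^\ast)\le K_n(t^\ast)+\sum_{i=1}^{m_n}\chi_i(t^\ast)$, so it is enough to show that each of the two terms on the right is at most $(\log n)^a/2$ with high probability. For the Kingman term I would turn the block-count bound into a reaching-time bound: writing $\ell:=\lceil(\log n)^a/2\rceil$ and letting $\tau^K_j$ denote the Kingman reaching time of level $j$, one has $\{K_n(t^\ast)>(\log n)^a/2\}=\{\tau^K_{\ell-1}>t^\ast\}$. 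Since $\tau^K_j$ is a sum of independent exponentials with $\mathbb{E}[\tau^K_j]=2(\tfrac1j-\tfrac1n)\le \tfrac2j$, Markov's inequality yields $\mathbb{P}(\tau^K_{\ell-1}>t^\ast)\le \tfrac{2}{(\ell-1)t^\ast}=O\big((\log n)^{b-a}\big)\to0$, the decay coming precisely from $a>b$.

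For the reactivation term, the $\chi_i(t^\ast)$ are i.i.d.\ Bernoulli of parameter $1-e^{-c_2t^\ast}\le c_2t^\ast$, so $\mathbb{E}\big[\sum_{i=1}^{m_n}\chi_i(t^\ast)\big]\le m_nc_2t^\ast=O\big((\log n)^{1-b}\big)$, and Markov's inequality gives $\mathbb{P}\big(\sum_{i=1}^{m_n}\chi_i(t^\ast)>(\log n)^a/2\big)=O\big((\log n)^{1-b-a}\big)\to0$, the decay coming precisely from $a+b>1$. A union bound over the three estimates then shows that, with probability tending to $1$, we are on $E_n$ and both terms are at most $(\log n)^a/2$; hence the integer $N_n(t^\ast)$ is at most $(\log n)^a$, so $N_n(t^\ast)\le\lfloor(\log n)^a\rfloor$ and $\tau_n^{(a)}\le t^\ast$.

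The estimates themselves are routine once \eqref{boundNnt} is in hand, so the main obstacle is conceptual rather than computational: recognizing that the descent of $N_n$ to level $\lfloor(\log n)^a\rfloor$ is governed by the competition between the fast Kingman descent (controlled by $a>b$) and the accumulation of reactivated seeds (controlled by $a+b>1$), and that these are exactly the two hypotheses needed to make the two terms in \eqref{boundNnt} negligible compared with $(\log n)^a$. I expect the careful bookkeeping in passing from the two ``block-count at a fixed time'' bounds to the single pathwise inequality $N_n(t^\ast)\le\lfloor(\log n)^a\rfloor$, together with the floor/ceiling matching of levels, to be the only delicate point.
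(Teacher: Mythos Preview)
Your proposal is correct and follows essentially the same approach as the paper: work on the high-probability event $E_n=\{\sup_t M_n(t)\le m_n\}$ from Lemma~\ref{cotasupM}, apply the coupling \eqref{boundNnt} at the fixed time $t^\ast=(\log n)^{-b}$, split into the Kingman term and the reactivation term, and use Markov's inequality on each to obtain the decays $O((\log n)^{b-a})$ and $O((\log n)^{1-b-a})$. The only cosmetic differences are that the paper phrases the skip-free argument as the set identity $\{\tau_n^{(a)}>t,E_n\}=\{N_n(t)>(\log n)^a,E_n\}$ and names the Kingman reaching time $\omega_{n,a}$, whereas you state the containment $\{N_n(t^\ast)\le\lfloor(\log n)^a\rfloor\}\subset\{\tau_n^{(a)}\le t^\ast\}$ explicitly and write the Kingman reaching time as $\tau^K_{\ell-1}$; the estimates and the roles of the hypotheses $a>b$ and $a+b>1$ are identical.
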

\begin{proof}
Denote $\lfloor 2c_1(1+\varepsilon)\log n\rfloor$ by $m_n$ and let $E_n=\{\sup_tM_n({t})\leq m_n\}$.
%and let $E_{n}^{(a)}=\bigcap_{i=1}^{\lfloor n-(\log n)^a\rfloor}E_{n,i}$
%and let  $E_n=\{\sup_t M_n(t)<2c(1+\varepsilon)\log n\}$.
We start by observing that
$$\mathbb P(\tau_n^{(a)}>{(\log n)^{-b}})=\mathbb P(\tau_n^{(a)}>{(\log n)^{-b}}, E_n)+\mathbb P(\tau_n^{(a)}>{(\log n)^{-b}},  E_n^c)$$
From \eqref{supM}, we get that
\begin{equation*}
\mathbb P( E_n^c)\leq\frac{1}{2c_1\varepsilon^2\log n}.
\end{equation*}
So it just remains to control the probability on the event $E_n$.
Recall $(K_n(t))_{t\geq0}$ and $(\chi_i(t))_{i\geq 1}$ from \eqref{boundNnt}.
   Let $\omega_{n,a}=\inf\{t>0:K_n(t)=\frac{1}{2}(\log n)^{a}\}$. 
Observe that
\begin{eqnarray*}
\{\tau_n^{(a)}>t,E_n\}
&=&\{N_n(t)>(\log n)^{a},E_n\}\\
&\subset&\{K_n(t)+\sum_{i=1}^{m_n} \chi_i(t)>(\log n)^{a}\}\\
&\subset&\{K_n(t)>\frac{1}{2}(\log n)^{a}\}\cup\{\sum_{i=1}^{m_n} \chi_i(t)>\frac{1}{2}(\log n)^{a}\}\\
&=&\{\omega_{n,a}>t\}\cup\{\sum_{i=1}^{m_n} \chi_i(t)>\frac{1}{2}(\log n)^{a}\}.
\end{eqnarray*}

Taking $t=(\log n)^{-b}$, we obtain
$$ \mathbb{P}(\tau_n^{(a)}>(\log n)^{-b}, E_n)
\leq \mathbb{P}(\omega_{n,a}> (\log n)^{-b})+ \mathbb P( \sum_{i=1}^{m_n} \chi_i((\log n )^{-b})>\frac{1}{2}(\log n)^{a} ).$$
An elementary calculation on sum of independent exponential variables shows that $$\E[\omega_{n,a}]\sim4(\log n)^{-a}.$$ %\textcolor{red}{and $\Var(\omega_{n,a})\sim\frac{32}{3}(\log n)^{-3a}$.}
So, Markov's inequality for $\omega_{n,a}$ gives
$$\mathbb{P}(\omega_{n,a}> (\log n)^{-b})\leq C(\log n)^{b-a}$$
for some constant $C>0$, which converges to 0 whenever $b<a$.
On the other hand, Markov's inequality applied to a binomial random variable with parameters $\lfloor2c_1(1+\varepsilon)\log n\rfloor$ and $1-\exp(-c_2(\log n)^{-b})$ (which expectation is of order $(\log n)^{1-b}$) lead to
$$\mathbb P( \sum_{i=1}^{m_n} \chi_i((\log n )^{-b})>\frac{1}{2}(\log n)^{a} )\leq C(\log n)^{1-b-a}.$$
This quantity converges to 0 as $a+b>1$. 
\end{proof}

\begin{rem} The rate of coalescence is quadratic with respect to the number of plants while the rate of deactivation (resp.  the rate of activation) is linear with respect to the number of plants (resp. the number of seeds). 
The latter lemma suggests that, until time $\tau_n^{(a)}$, for $a>1/2$, the block-counting process $(N_n(t))_{t\ge0}$ behaves similar to that of the Kingman coalescent.
However, at time $\tau_n^{(1/2)}$, the system reaches a level of $\sqrt{\log n}$ plants and the times of decay are no longer close to those of the Kingman coalescent.
Indeed, at this time, we claim that the number of seeds is of order $\log n$ and the coalescence events do not dominate any more the dynamics. The seed bank coalescent then enters into a mixed regime with coalescence and activation occurring at the same velocity.
\end{rem}

%The latter lemma permits to complement Lemma \ref{theta} to obtain the following result.
%\begin{prop}\label{cotasupN}
%For any $\varepsilon>0$,
%\begin{equation*}
% \lim_{n\rightarrow\infty}\mathbb{P}\left({\theta_n}>\tau_n^{(1+\varepsilon)}\right)=1,
%\end{equation*}
%which is equivalent to 
%\begin{equation*}
% \lim_{n\rightarrow\infty}\mathbb{P}\left( N_n({\theta_n})<{(\log n)^{1+\varepsilon}}\right)=1
%\end{equation*}
%since the process $(N_n(t))_{t\geq0}$ is non-increasing until time $\theta_n$.
%\end{prop}
%%\begin{proof}
%Let $1<\delta<a$.
%We have
%\begin{eqnarray*}
%\P(\theta_n<\tau_n^{(a)})&=&\P\left( \theta_n<\tau_n^{(1+\varepsilon)}<(\log n)^{-\delta} \right) +\P\left( \theta_n<\tau_n^{(1+\varepsilon)}, \tau_n^{(1+\varepsilon)}\geq(\log n)^{-\delta} \right)\\
%&\leq& \P\left( \theta_n<{(\log n)^{-\delta}}\right) +\P\left( \tau_n^{(1+\varepsilon)}\geq {(\log n)^{-\delta}}\right).
%\end{eqnarray*}

%The first term converges to 0 thanks to Lemma \ref{theta}.
%The second term also converges to 0 thanks to  Lemma \ref{desviaciontau}.
%\end{proof}

We now provide the lower bound for $M_n(\theta_n)$.
This result, combined with Lemma \ref{cotasupM} provides the convergence \eqref{limitM} in Proposition \ref{summarythetan}.

\begin{lemma}\label{cotainfM}
For any $\varepsilon>0$ and $a>1$,
\begin{equation}\label{borneinfMtau}
\lim_{n\rightarrow\infty}\P(M_n({\tau_n^{(a)}})>2c_1(1-\varepsilon)\log n)=1.
\end{equation}
which implies that
\begin{equation}\label{borneinfMtheta}
\lim_{n\rightarrow\infty}\P(M_n({\theta_n})>2c_1(1-\varepsilon)\log n)=1.
\end{equation}
\end{lemma}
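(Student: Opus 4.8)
The plan is to show that, with high probability, the chain $N_n$ reaches the low level $\ell:=\lfloor(\log n)^a\rfloor$ \emph{before} the first reactivation $\theta_n$, and that on this event the number of seeds present equals the number of deactivation steps taken by $N_n$ on its way down from $n$ to $\ell$. Indeed, before $\theta_n$ the count $M_n$ is non-decreasing (each deactivation adds one seed and no reactivation has yet removed any), so on $\{\tau_n^{(a)}<\theta_n\}$ we have $M_n(\tau_n^{(a)})=\sum_{i=\ell}^{n-1}B_n^i$: the coupling with the coloured coalescent is exact up to $\theta_n$, so that $\underline\tau_n^i=\tau_n^i$ for $i\geq\ell$ and the deactivation indicators coincide with the Bernoulli variables $B_n^i$ of \eqref{beeni}. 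Thus the whole statement reduces to a concentration estimate for $\sum_{i=\ell}^{n-1}B_n^i$ together with the time-scale comparison $\tau_n^{(a)}<\theta_n$.

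First I would establish $\P(\tau_n^{(a)}<\theta_n)\to1$. Since $a>1$, fix $b$ with $1<b<a$. Lemma \ref{desviaciontau}, applicable because $a>b$ and $a+b>1$, gives $\tau_n^{(a)}\leq(\log n)^{-b}$ with high probability. For the matching lower bound on $\theta_n$ I would reuse the stochastic domination at the heart of Lemma \ref{theta}: on the event $\{M_n(\theta_n)\leq m_n\}$ with $m_n=\lfloor 2c_1(1+\delta)\log n\rfloor$ for a fixed $\delta>0$, the variable $\theta_n$ dominates an exponential of parameter $c_2m_n\sim 2c_1c_2\log n$, so that $\P(\theta_n\leq(\log n)^{-b})\leq 1-\exp\{-c_2m_n(\log n)^{-b}\}+o(1)$. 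As $b>1$, the exponent $c_2m_n(\log n)^{-b}$ is of order $(\log n)^{1-b}\to0$, whence $\P(\theta_n\leq(\log n)^{-b})\to0$. Combining the two bounds yields $\tau_n^{(a)}\leq(\log n)^{-b}<\theta_n$ with high probability.

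Next I would prove the concentration of $\Sigma_n:=\sum_{i=\ell}^{n-1}B_n^i$. Its mean is
\[\E[\Sigma_n]=\sum_{i=\ell}^{n-1}\frac{2c_1}{i+2c_1}\sim 2c_1\log\frac{n}{\ell}=2c_1(\log n-a\log\log n)\sim 2c_1\log n,\]
and, the $B_n^i$ being independent, $\Var(\Sigma_n)\leq\E[\Sigma_n]=O(\log n)$. Bienaym\'e--Chebyshev's inequality then shows that $\Sigma_n/(2c_1\log n)\to1$ in probability, so in particular $\P(\Sigma_n>2c_1(1-\varepsilon)\log n)\to1$. Together with $\P(\tau_n^{(a)}<\theta_n)\to1$ and the identity $M_n(\tau_n^{(a)})=\Sigma_n$ valid on that event, this establishes \eqref{borneinfMtau}.

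Finally, \eqref{borneinfMtheta} follows by transferring the bound from $\tau_n^{(a)}$ to $\theta_n$. On the high-probability event $\{\tau_n^{(a)}<\theta_n\}$ the map $t\mapsto M_n(t)$ is non-decreasing on $[0,\theta_n)$, so $M_n(\theta_n)\geq M_n(\theta_n-)-1\geq M_n(\tau_n^{(a)})-1$, the single unit lost at the reactivation being harmless: applying \eqref{borneinfMtau} with $\varepsilon$ replaced by $\varepsilon/2$ gives $M_n(\theta_n)\geq 2c_1(1-\varepsilon/2)\log n-1>2c_1(1-\varepsilon)\log n$ for $n$ large. The main obstacle is the time-scale comparison of the second paragraph: both $\tau_n^{(a)}$ and $\theta_n$ vanish, the former at scale $(\log n)^{-b}$ and the latter at scale $(\log n)^{-1}$, and it is precisely the hypothesis $a>1$ that leaves room to insert an exponent $b\in(1,a)$ separating the two; the concentration step is then routine once this ordering is secured.
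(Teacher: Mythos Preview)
Your proof is correct and follows the same strategy as the paper: on $\{\tau_n^{(a)}<\theta_n\}$ identify $M_n(\tau_n^{(a)})$ with the Bernoulli sum $\sum_{i=\ell}^{n-1}B_n^i$, apply Bienaym\'e--Chebyshev, and then pass to $\theta_n$ by the monotonicity of $M_n$ before the first reactivation. The only point of divergence is how you reach $\P(\tau_n^{(a)}<\theta_n)\to1$: the paper obtains this in one line from \eqref{eq9}, since $a>1$ forces $\P(N_n(\theta_n)\geq(\log n)^a)\to0$ and the monotone decrease of $N_n$ on $[0,\theta_n)$ does the rest; you instead separate the two stopping times on an intermediate scale $(\log n)^{-b}$, $1<b<a$, via Lemma~\ref{desviaciontau} and the exponential lower bound underlying \eqref{borneinfTh}. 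Both routes are valid and draw on the same prior lemmas; the paper's is shorter, while yours makes the time-scale separation explicit (and your treatment of the $-1$ at the reactivation jump is slightly more careful than the paper's bare appeal to monotonicity).
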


\begin{proof}
Let us first note that \eqref{eq9} implies that
$$\lim_{n\rightarrow\infty}\P(N_n({\theta_n})<(\log n)^a)=1,$$
which, thanks to the monotonicity of $(N_n(t))_{t\geq0}$ until time $\theta_n$, is equivalent to
\begin{equation*}
 \lim_{n\rightarrow\infty}\mathbb{P}\left({\theta_n}>\tau_n^{(a)}\right)=1.
\end{equation*}
Due to the monotonicity of $(M_n(t))_{t\geq0}$ until time $\theta_n$, \eqref{borneinfMtau} implies \eqref{borneinfMtheta}.

Now, on the event $\{\theta_n>\tau_n^{(a)}\}$, we have
$$M_n(\tau_n^{(a)})=\sum_{i=\lfloor(\log n)^a\rfloor}^{n-1}B^i_{n}$$
where the $B_n^i$'s are the Bernoulli random variables introduced in \eqref{beeni}.
So,
\begin{align*}
\P(M_n({\tau_n^{(a)}})<2c_1(1-\varepsilon)\log n)&=\P( M_n({\tau_n^{(a)}})<2c_1(1-\varepsilon)\log n, \theta_n>\tau_n^{(a)})+o(1)\\ &\leq\P\left(\sum_{i=\lfloor (\log n)^a\rfloor}^{n-1}B^i_{n}<2c_1(1-\varepsilon)\log n\right)+o(1)\\
&=\P\left(\sum_{i=1}^{n-1}B^i_{n}<2c_1(1-\varepsilon)\log n+\sum_{i=1}^{\lfloor (\log n)^a\rfloor-1}B^i_{n}\right)+o(1).
\end{align*}
It is easy to convince oneself that $\sum_{i=1}^{\lfloor (\log n)^a\rfloor-1}B^i_{n}$ is of order $\log(\log n)^{a}$. The latter converges to 0 thanks to Bienaym\'e-Chebyshev's inequality.
\end{proof}

We 
are now able to end the overview of the system at time $\theta_n$.
The following result, combined with Lemma \ref{theta} provides the convergences \eqref{limitN} and \eqref{limittheta} in Proposition \ref{summarythetan}.
\begin{lemma}\label{cotainfN}
Recall $T$ and $Z$ from Proposition \ref{summarythetan}. We have that
\begin{equation}\label{Frechetinf}
\lim_{n\rightarrow\infty} \mathbb{P}\left( {N_n({\theta_n}})\leq z{\log n}\right)\leq\mathbb{P}\left( Z\leq z\right).
\end{equation}
which implies that
\begin{equation}\label{cotasuptheta}
\lim_{n\rightarrow\infty}\mathbb{P}\left(\theta_n\log n >t\right)\leq\P(T>t).
\end{equation}

\end{lemma}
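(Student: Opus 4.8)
The plan is to prove \eqref{Frechetinf} directly, by controlling the probability that $N_n$ descends all the way to level $z\log n$ before any seed reactivates, and then to deduce \eqref{cotasuptheta} from it through the descent relation $\theta_n\approx 2/N_n(\theta_n)$.

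\textbf{Proof of \eqref{Frechetinf}.} Since an activation raises $N_n$ by one, the event $\{N_n(\theta_n)\le z\log n\}$ coincides, up to a harmless one-level shift, with the event that $N_n$ reaches level $\lfloor z\log n\rfloor$ with no activation, i.e. $\{\theta_n\ge\tau_n^{\lfloor z\log n\rfloor}\}$, and I would bound the probability of the latter. Fix $\eps>0$ and a constant $a>1$, and work on the good event
$$G=\{\theta_n>\tau_n^{(a)}\}\cap\{M_n(\tau_n^{(a)})>2c_1(1-\eps)\log n\},$$
whose probability tends to $1$: the first event holds \whp\ by \eqref{eq9} (exactly as in the proof of Lemma~\ref{cotainfM}), and the second by \eqref{borneinfMtau}. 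On $G$ the chain has already reached level $(\log n)^a$ without activation and carries at least $2c_1(1-\eps)\log n$ seeds; since $M_n$ is nondecreasing up to $\theta_n$, this lower bound persists throughout the remaining descent from $(\log n)^a$ down to $z\log n$. Hence at each subsequent level $k$ the activation rate $c_2M_n$ is at least $2c_1c_2(1-\eps)\log n$, so the conditional probability of avoiding activation while sitting at level $k$ is at most $(\binom{k}{2}+c_1k)/(\binom{k}{2}+c_1k+2c_1c_2(1-\eps)\log n)$. Multiplying these bounds via the strong Markov property gives
$$\P\big(\theta_n\ge\tau_n^{\lfloor z\log n\rfloor}\big)\le \P(G^c)+\prod_{k=\lfloor z\log n\rfloor+1}^{\lfloor(\log n)^a\rfloor}\frac{\binom{k}{2}+c_1k}{\binom{k}{2}+c_1k+2c_1c_2(1-\eps)\log n}.$$

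Taking logarithms, using $\binom{k}{2}+c_1k\sim k^2/2$ and $\log(1+x)\sim x$ (valid since the ratios tend to $0$ uniformly for $k\ge z\log n$), the product behaves like $\exp\{-4c_1c_2(1-\eps)\log n\sum_k k^{-2}(1+o(1))\}$. A Riemann-sum estimate yields $\sum_{k=\lfloor z\log n\rfloor+1}^{\lfloor(\log n)^a\rfloor}k^{-2}\sim (z\log n)^{-1}$, the upper endpoint contributing only $O((\log n)^{-a})$, which is negligible because $a>1$. Hence the product converges to $\exp\{-4c_1c_2(1-\eps)/z\}$, so $\limsup_n\P(N_n(\theta_n)\le z\log n)\le\exp\{-4c_1c_2(1-\eps)/z\}$, and letting $\eps\downarrow0$ gives \eqref{Frechetinf}.

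\textbf{Deduction of \eqref{cotasuptheta}.} The key is that level $i$ is reached at time $\tau_n^i$ with $\tau_n^i\log n\to 2/(i/\log n)$; concretely a first- and second-moment computation identical in spirit to the one in the proof of Proposition~\ref{teorgama} gives $\tau_n^{\lfloor z\log n\rfloor}\log n\to 2/z$ in probability for every $z>0$. Fix $t>0$ and set $i_n=\lfloor(2/t)(1+\eps)\log n\rfloor$, so that $\tau_n^{i_n}\log n\to t/(1+\eps)<t$ and thus $\tau_n^{i_n}<t/\log n$ \whp. On $\{\theta_n\log n>t\}$ we then have $\theta_n>t/\log n>\tau_n^{i_n}$ \whp, and $\theta_n>\tau_n^{i_n}$ forces $N_n(\theta_n)\le i_n+1$; so, up to an $o(1)$ set, $\{\theta_n\log n>t\}\subseteq\{N_n(\theta_n)\le(2/t)(1+2\eps)\log n\}$. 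Applying \eqref{Frechetinf} with $z=(2/t)(1+2\eps)$ gives $\limsup_n\P(\theta_n\log n>t)\le\P(Z\le(2/t)(1+2\eps))=\exp\{-2c_1c_2t/(1+2\eps)\}$, and $\eps\downarrow0$ yields $\P(T>t)$, which is \eqref{cotasuptheta}.

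\textbf{Main obstacle.} The delicate point is the conditioning in the first part: $M_n$ is monotone only up to $\theta_n$, so the lower bound $M_n\ge 2c_1(1-\eps)\log n$ feeding the activation rate must be propagated along the descent while at the same time being the event whose probability is estimated. Restricting to the event $G$ and applying the strong Markov property level by level, so that the product is genuinely an upper bound on a product of conditional survival probabilities, is what removes the apparent circularity; the remaining Riemann-sum asymptotics are routine.
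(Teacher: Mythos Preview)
Your proof of \eqref{Frechetinf} is correct and essentially identical to the paper's: both restrict to the high-probability event $\{\theta_n>\tau_n^{(a)}\}\cap\{M_n(\tau_n^{(a)})>2c_1(1-\eps)\log n\}$, propagate the seed lower bound along the descent, and evaluate the same product. The paper phrases the step through an auxiliary exponential variable and the coloured-coalescent inequality~\eqref{tau>bartau}, while you invoke the strong Markov property level by level; the resulting bound and Riemann-sum asymptotics coincide.

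For \eqref{cotasuptheta} your route is genuinely different from the paper's. The paper does \emph{not} deduce \eqref{cotasuptheta} from \eqref{Frechetinf}; it repeats the argument of part~1 with the real times $t_n=t/\log n$ and $s_n=(\log n)^{-b}$ in place of the hitting times $\hat\tau_n$ and $\tau_n^{(a)}$, using Lemma~\ref{desviaciontau} and \eqref{borneinfMtau} to control $\P(M_n(s_n)\le m_n)$. Your reduction via the descent relation $\theta_n\approx 2/N_n(\theta_n)$ is an attractive shortcut, and the distributional identity $T\stackrel{d}{=}2/Z$ makes it plausible.

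There is, however, a genuine gap in the reduction as written. You assert that ``a first- and second-moment computation \ldots\ gives $\tau_n^{\lfloor z\log n\rfloor}\log n\to 2/z$ in probability''. But $\tau_n^i$ is the hitting time of the \emph{full} seed bank process, which can jump upward after $\theta_n$; it is not a sum of independent exponentials with parameters $\binom{k}{2}+c_1k$, and the moment computation of Proposition~\ref{teorgama} does not apply to it. What that computation controls is the \emph{no-activation} hitting time $\underline\tau_n^{i_n}=\sum_{k=i_n+1}^n\eta_k$ with $\eta_k\sim\text{Exp}(\binom{k}{2}+c_1k)$ independent; for this variable you do get $\underline\tau_n^{i_n}\log n\to t/(1+\eps)$ and hence $\P(\underline\tau_n^{i_n}>t/\log n)\to 0$. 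The fix is then one line: on $\{\theta_n>t/\log n\}$ the process $N_n$ coincides with its no-activation trajectory on $[0,t/\log n]$, so $\{\theta_n>t/\log n,\ N_n(\theta_n)>i_n+1\}\subset\{\theta_n>t/\log n,\ N_n(t/\log n)>i_n\}\subset\{\underline\tau_n^{i_n}>t/\log n\}$, which has probability $o(1)$. With $\tau_n$ replaced by $\underline\tau_n$ in this step your deduction goes through; as stated, the unconditional claim about $\tau_n^{i_n}$ is not justified.
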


\begin{proof}
Fix $\varepsilon>0$ and denote $\lfloor 2c_1(1-\varepsilon){\log n}\rfloor$ by $m_n$. Also denote $\tau_n^{\lfloor z\log n\rfloor}$ by $\hat\tau_n$.
First observe that
$$\mathbb{P}\left( {N_n({\theta_n}})\leq z\log n\right)=\P(\theta_n\geq\hat\tau_n)$$
So it is enough proving that
\begin{equation}\label{upboundtheta}
\lim_{n\rightarrow\infty}\P(\theta_n\geq\hat\tau_n)\leq \P(Z\leq z).
\end{equation}

For any $t\geq0$, define $X({t})$ to be the number of reactivations until time $t$. Let $\mathcal E_i$ be an exponential random variable with parameter $c_2i$, that can be understood as the minimum of $i$ independent exponential random variables with parameter $c_2$. Then,  for any $a>1$,

\begin{eqnarray*}
\mathbb{P}(\theta_n\geq\hat\tau_n)&=&\mathbb{P}(X({\hat\tau_n})=0)=\mathbb{P}(X({\hat\tau_n})-X({\tau_n^{(a)}})=0,X({\tau_n^{(a)}})=0)\\
&\leq&\mathbb{P}(X({\hat\tau_n})-X({\tau_n^{(a)}})=0\mid X({\tau_n^{(a)}})=0)\\
&\leq&\mathbb{P}(\mathcal E_{M_{n}({\tau_n^{(a)}})}>\hat\tau_n-\tau_n^{(a)}).
\end{eqnarray*}
The  latter inequality follows by observing that if there are no activations in the time interval $[\tau^{(a)}_n,\hat\tau_n]$, then none of the $M_{n}({\tau^{(a)}_n})$ seeds present at time $\tau^{(a)}_n$ have activated. 
Hence,
\begin{align*}
\mathbb{P}(\theta_n\geq\hat\tau_n)
&\leq\mathbb{E}\left[ e^{-c_2(\hat\tau_n-\tau^{(a)}_n) M_n({\tau^{(a)}_n})} \right]\\
&=\mathbb{E}\left[ e^{-c_2(\hat\tau_n-\tau^{(a)}_n)M_n({\tau^{(a)}_n})}{\bf1}_{\{M_n({\tau^{(a)}_n})>m_n\}} \right]\\
&\hspace{.4cm}+ \mathbb{E}\left[ e^{-c_2(\hat\tau_n-\tau^{(a)}_n)  M_n({\tau^{(a)}_n})}{\bf 1}_{\{M_n({\tau^{(a)}_n})\leq m_n\}} \right]\\
&\leq\mathbb{E}\left[ e^{-c_2m_n(\hat\tau_n-\tau^{(a)}_n) } \right]
+\mathbb{P}(M_n({\tau^{(a)}_n})\leq m_n).
\end{align*}
So, by denoting for simplicity ${n}_z=\lfloor z\log n\rfloor$ and $n_a=\lfloor (\log n)^{a}\rfloor$, and by \eqref{tau>bartau}, we obtain
\begin{equation}\label{th<taub}
\mathbb{P}(\theta_n\geq\hat\tau_n)
\leq \mathbb{E}\left[ e^{-c_2m_n\sum_{i={n}_z+1}^{n_a}(\underline{\tau}^{i-1}_n-\underline{\tau}_n^{i})} \right]
+\mathbb{P}(M_n({\tau^{(a)}_n})\leq m_n).
\end{equation}

Since the variables $\underline{\tau}_n^{i-1}-\underline{\tau}_n^{i}$ are independent and exponentially distributed, we have
\begin{align*}
\mathbb{E}\left[ e^{-c_2m_n\sum_{i=n_z+1}^{n_a}(\underline{\tau}^{i-1}_n-\underline{\tau}_n^{i})}\right]
&=\prod_{i={n}_z+1}^{{n}_a}\frac{\binom{i}{2}+c_1i}{\binom{i}{2}+c_1i+c_2m_n}\\
&=\exp\left\{ -\sum_{i={n}_z+1}^{n_a}\log\left( 1+\frac{2c_2m_n}{i(i-1+2c_1)}\right)  \right\}.
\end{align*}
Now, we can use equivalences.
$$\mathbb{E}\left[ e^{-c_2m_n\sum_{i=n_z+1}^{n_a}(\underline{\tau}^{i-1}_n-\underline{\tau}_n^{i})}\right]
\sim\exp\left\{ -\sum_{i={n}_z+1}^{n_a}\frac{2c_2m_n}{i^2}  \right\}
$$
A similar limit as that given in \eqref{Riemann1/z} implies that
\begin{equation}\label{term1}
\lim_{n\rightarrow\infty}\mathbb{E}\left[ e^{-c_2m_n\sum_{i={n}_z+1}^{n_a}(\underline{\tau}^{i-1}_n-\underline{\tau}_n^{i})}\ \right]=e^{-\frac{4c_1c_2(1-\varepsilon)}{z}}=\P(Z\leq z/(1-\varepsilon)).
\end{equation}
Plugging \eqref{term1} and \eqref{borneinfMtau} into \eqref{th<taub}, and observing that the result is true for any $\varepsilon>0$, we get \eqref{upboundtheta}.

A very similar path is followed to obtain \eqref{cotasuptheta}.
For some $t>0,$ let $t_n=t(\log n)^{-1}$ and for some $b>1$, let $s_n=(\log n)^{-b}$. As before, we get
\begin{align*}
\mathbb{P}\left(\theta_n\log n >t\right)&=
\mathbb{P}\left( \theta_n>t_n\right) \\
&=\mathbb{P}\left( X\left( t_n\right) =0\right)\\
&\leq e^{-c_2m_n(t_n-s_n) } 
+\mathbb{P}(M_n(s_n)\leq m_n),
\end{align*}
The first term converges to $\P(T>t(1-\varepsilon))$ and the second to 0.
To get the latter, first use (\ref{borneinfTh})
 to see that $$\lim_{n\rightarrow\infty}\P(\theta_n>s_n)=1.$$
Then, just choose $a>b$ such that Lemma \ref{desviaciontau} holds, and use \eqref{borneinfMtau}
Since the result is true for any $\varepsilon>0$, we get \eqref{cotasuptheta}.
%To obtain \eqref{cotasuptheta}, fix $\varepsilon>0$ and choose $a_1\in(1/2,1)$ such that $1-\varepsilon<a_1$, then
%\begin{eqnarray*}
%\P\left( \theta_n\geq{(\log n)^{-1+\varepsilon}}\right)&=&\P\left(  \theta_n\geq{\log n^{-1+\varepsilon}}, \theta_n>\tau_n^{(a_1)} \right)\\
%&+&\P\left(  \theta_n\geq{(\log n)^{-1+\varepsilon}}, \theta_n\leq\tau_n^{(a_1)} \right) \\
%&\leq& \P\left( \theta_n\geq\tau_n^{(a_1)}\right) +\P\left( \tau_n^{(a_1)}\geq {(\log n)^{-1+\varepsilon}}\right).
%\end{eqnarray*}
%
%By Lemma \ref{desviaciontau}, we get that the second term above converges to 0 as $n$ goes to infinity.
%So it is enough proving that for every $a_1\in\left( {1}/{2},1\right)$, 
%\begin{equation}\label{belowboundtheta}
%\lim_{n\rightarrow\infty}\P(\theta_n<\tau_n^{(a_1)})= 1,
%\end{equation}
%which can be obtained as in the first part of the proof,
%to get \eqref{upboundtheta}.
\end{proof}

\section{Branch Lengths}\label{secLength}
In this section, we study the total branch length $L_n$ of the seed bank coalescent starting with $n$ plants and no seeds as defined in \eqref{defL} and prove Theorem \ref{thmL} by combining upcoming Theorems \ref{thmA} and \ref{thmI}.

\subsection{The active length}
Consider the active  length defined in \eqref{defA}.
We prove that this variable has the same asymptotics as those of the total length of the Kingman coalescent.
\begin{thm}\label{thmA}
Consider the seed bank coalescent starting with $n$ plants and no seeds. Then,
$$\lim_{n\rightarrow\infty}\frac{A_n}{\log n}=2$$
in probability.
\end{thm}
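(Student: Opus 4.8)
The plan is to split the active length at the first activation time $\theta_n$, writing $A_n=\int_0^{\theta_n}N_n(t)\,dt+\int_{\theta_n}^{\sigma_n}N_n(t)\,dt$, and to show that the first integral already carries the full asymptotics $2\log n$ while the second is negligible. With this decomposition the lower bound is immediate, since the integrand is nonnegative and $\theta_n\le\sigma_n$ give $A_n\ge\int_0^{\theta_n}N_n\,dt$; the whole difficulty is concentrated in bounding the post-$\theta_n$ contribution from above. Throughout I would lean on Proposition \ref{summarythetan} ($N_n(\theta_n),M_n(\theta_n)=O(\log n)$) and on \cite{Blath2016} ($\sigma_n=O(\log\log n)$ with high probability).

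For the main term I would use that up to $\theta_n$ only coalescences and deactivations occur, so $N_n$ decreases monotonically from $n$ to $N_n(\theta_n)$ one unit at a time, the holding time at level $i$ being exponential with rate $\binom i2+c_1 i+c_2 M_n$ (coalescence, deactivation, and the competing reactivation clock that has not yet fired). Since $\sup_t M_n(t)=O(\log n)$ with high probability by Lemma \ref{cotasupM}, and $\binom i2$ dominates $c_2 M_n$ on the relevant range $i\ge(\log n)^a$ with $a>1$ (where $\tau_n^{(a)}<\theta_n$ with high probability, as in the proof of Lemma \ref{cotainfM}), these holding times are, up to a negligible correction, $\mathrm{Exp}(\binom i2+c_1 i)$. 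Hence $\int_0^{\theta_n}N_n\,dt$ is, to leading order, $\sum_{i=N_n(\theta_n)+1}^{n} i\,W_i$ with independent $W_i\sim\mathrm{Exp}(\binom i2+c_1 i)$, whose conditional mean is $\sum_i \frac{2}{i-1+2c_1}\sim 2\log\!\big(n/N_n(\theta_n)\big)\sim 2\log n$ and whose variance is $\sum_i \frac{4}{(i-1+2c_1)^2}=O(1)$. Chebyshev's inequality then gives $\int_0^{\theta_n}N_n\,dt/\log n\to 2$ in probability. I would make the exponential bookkeeping rigorous by sandwiching each holding time between $\mathrm{Exp}(\binom i2+c_1 i)$ and $\mathrm{Exp}(\binom i2+c_1 i+c_2 m_n)$ on the event $\{\sup_t M_n\le m_n\}$, both bounds producing the same $2\log n$ limit.

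The hard part is showing $\int_{\theta_n}^{\sigma_n}N_n(t)\,dt=o(\log n)$ in probability. The coupling \eqref{boundNnt} is useless here: it replaces the reactivated lineages by $\sum_i\chi_i(t)$ \emph{decoupled} from coalescence, and integrating this over the remaining lifetime $\sigma_n=O(\log\log n)$ overcounts by a factor $\log\log n$, yielding only $O(\log n\cdot\log\log n)$. Instead I would keep coalescence acting on the reactivated lineages by dominating $N_n$ after $\theta_n$ by a Kingman coalescent with immigration. Conditionally on $\mathcal F_{\theta_n}$, the process $t\mapsto N_n(\theta_n+t)$ goes up by one at rate $c_2 M_n(\theta_n+t)\le c_2\ell$, where $\ell:=N_n(\theta_n)+M_n(\theta_n)$ bounds the non-increasing total number of lineages after $\theta_n$, and goes down at rate at least $\binom{N_n}2$. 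Thus there is a coupling under which $N_n(\theta_n+t)\le\hat N(t)$ for all $t$, with $\hat N$ the birth–death chain of constant immigration rate $\lambda=c_2\ell$ and quadratic death rate $\binom k2$, started at $N_n(\theta_n)$.

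It then remains to estimate $\E\big[\int_0^{\sigma_n}\hat N(t)\,dt\big]$. Since $\ell=O(\log n)$ with high probability, we have $\lambda=O(\log n)$, and balancing $\lambda\approx\binom{\bar N}2$ shows the stationary mean of $\hat N$ is of order $\sqrt\lambda=O(\sqrt{\log n})$; the descent of $\hat N$ from its initial value $N_n(\theta_n)=O(\log n)$ down to this equilibrium contributes only $O(\log\log n)$ (a harmonic-type sum $\sum_k k/\binom k2$). As $\sigma_n=O(\log\log n)$ with high probability, integrating the equilibrium level over this window gives $\E\big[\int_0^{C\log\log n}\hat N\,dt\big]=O(\sqrt{\log n}\,\log\log n)=o(\log n)$. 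A Markov inequality together with the high-probability bounds on $\ell$ and on $\sigma_n$ then gives $\int_{\theta_n}^{\sigma_n}N_n\,dt=o(\log n)$ in probability, which completes the upper bound and hence the theorem. The two delicate points are the validity of the monotone coupling with the immigration chain—crucially, the immigration rate must be controlled by the conserved quantity $c_2\ell$ rather than by the momentary $c_2 M_n$—and the transient-plus-equilibrium estimate for $\E\big[\int_0^{\cdot}\hat N\big]$; these constitute the crux of the argument.
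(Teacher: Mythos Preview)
Your treatment of the main term $\int_0^{\theta_n}N_n\,dt$ via sandwiching the holding times between $\mathrm{Exp}\big(\binom{i}{2}+c_1i\big)$ and $\mathrm{Exp}\big(\binom{i}{2}+c_1i+c_2m_n\big)$ on $\{\sup_t M_n\le m_n,\ N_n(\theta_n)\le(\log n)^{1+\varepsilon}\}$ is essentially the paper's argument. For the post-$\theta_n$ piece, however, the paper proceeds differently: it splits once more at $\tau_n^{(a)}$ for some $a\in(1/2,1)$, bounding the middle piece $\int_{\theta_n}^{\tau_n^{(a)}}N_n\,dt$ crudely by $\tau_n^{(a)}\cdot(N_n(\theta_n)+M_n(\theta_n))$ and handling the tail $A_n^3=\int_{\tau_n^{(a)}}^{\sigma_n}N_n\,dt$ through the identity $\E[A_n^3]=c_1^{-1}\E[D_n]$, where $D_n$ counts the deactivations after $\tau_n^{(a)}$; Lemmas~4.10--4.11 of \cite{Blath2016} then give $\E[D_n]=O(\sqrt{\log n})$. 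Your Kingman-with-constant-immigration domination is a valid and more self-contained alternative: the monotone coupling $N_n(\theta_n+t)\le\hat N(t)$ holds because $M_n(\theta_n+t)\le\ell$ (the total lineage count is non-increasing) and the true down-rate $\binom{k}{2}+c_1k$ exceeds $\binom{k}{2}$; your transient-plus-equilibrium heuristic can be made rigorous by observing that $m(t):=\E[\hat N(t)]$ satisfies $m'\le\lambda-\binom{m}{2}$ by convexity of $k\mapsto\binom{k}{2}$, whence $\int_0^T m\,dt=O(\log(N_0^2/\lambda)+\sqrt{\lambda}\,T)$. One point to tighten: \cite{Blath2016} gives only $\E[\sigma_n]\asymp\log\log n$, not a direct high-probability bound, so take a deterministic cutoff $T_n=(\log\log n)^2$, use Markov on $\sigma_n$ to get $\P(\sigma_n>T_n)\to0$, and then apply Markov to $\int_0^{T_n}\hat N\,dt$. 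The paper's deactivation identity yields the sharper $O(\sqrt{\log n})$ bound, whereas your route gives $O(\sqrt{\log n}\,(\log\log n)^2)$, but both are comfortably $o(\log n)$.
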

\begin{proof}
Recall the notation $\tau_n^{(a)}=\tau_n^{\lfloor ({\log n})^a\rfloor}$ and consider some $a\in(1/2,1)$.
Divide $A_n$ in three parts
$$A^1_n=\int_0^{\theta_n}N_n(t)dt\quad, \quad A^2_n=\int_{\theta_n}^{\tau_n^{(a)}}N_n(t)dt\quad \text{ and }\quad A^3_n=\int_{\tau_n^{(a)}}^{\sigma_n}N_n(t)dt.$$
Here we will work on the event $\{\theta_n\leq\tau_n^{(a)}\}$.
On the complementary event, the proof is more easily following the same steps.
The result is obtained from \eqref{cvA1}, \eqref{cvA2} and \eqref{cvA3} in the sequel.

i) Let us first prove that
\begin{equation}\label{cvA1}
\lim_{n\rightarrow\infty}\frac{A^1_n}{\log n}=2
\end{equation}
in probability.
Observe that, between times 0 and $\theta_n$, only coalescence or deactivation events occur. This implies that we can rewrite  $A_n^1$ as follows, 
\begin{align*}
A_n^1=\sum_{i=N_n({\theta_n})+1}^n iE_i,
\end{align*}
where, given $M_n(\tau_n^{i})$,  the $E_i$'s are independent exponential random variables with respective parameters $\binom{i}{2}+c_1i+c_2M_n(\tau_n^{i})$. 
Let $h_n=\sum_{i=1}^{n-1}\frac{2}{i+2c_1}$. By proving that
$$\mathbb E[|A_n^1-h_n|]=o(\log n),$$
we get the desired result.
To this. Observe that the variable $A_n^1$ is stochastically bounded by the length of a Kingman coalescent with freezing, that is
$$H_n=\sum_{i=2}^{n} i V_i,
$$
where  the $V_i$'s, as in Section \ref{secgamma}, are independent exponential random variables with respective parameters $\binom{i}{2}+c_1i$.
This is true because the seeds ``accelerate'' the jump times.
To be precise consider the following coupling. Let $V_i=\min{\{ E_i^{(c)}, E_i^{(d)}\}}$ where $ E_i^{(c)}$ is exponential with parameter $\binom{i}{2}$ and $ E_i^{(d)}$ is exponential with parameter $c_1i$. Now let $ E_{i,m}^{(a)}$ be exponential with parameter $c_2m$. 
Construct a process $(\tilde N_n(t),\tilde M_n(t))_{t\geq0}$, equal in distribution to $(N_n(t),M_n(t))_{t\geq0}$ up to time $\theta_n$, recursively, using these exponential random variables. This is,
\[
(\tilde{N}_{n}(t),\tilde{M}_{n}(t))\text{ jumps from }(i,m)
\text{ to}\left\{ \begin{array}{lll}
(i-1,m), & \textrm{if } \min{\{ E_i^{(c)}, E_i^{(d)},  E_{i,m}^{(a)}\}}= E_i^{(c)},\\
(i-1,m+1), &  \textrm{if } \min{\{ E_i^{(c)}, E_i^{(d)},  E_{i,m}^{(a)}\}}= E_i^{(d)},\\
(0,0), & \textrm{otherwise }.\\
\end{array}
\right .
\]
Here $(0,0)$ represents a cemetery state. Note that in distribution $(\tilde N_n(t),\tilde M_n(t))=(N_n(t),M_n(t))1_{\{\theta_n>t\}}$. Thus, by writing $(\tilde\tau_n^i)_{i=1}^{n}$ for the successive jump times of the new process and $\tilde r_n=\sup\{i\geq1:\min{\{ E_i^{(c)}, E_i^{(d)},  E_{i,\tilde M_n(\tilde\tau_n^{i})}^{(a)}\}}= E_{i,\tilde M_n(\tilde\tau_n^{i})}^{(a)}\}$, we obtain that
$$
A_n^1= \sum_{i=\tilde r_n+1}^n i V_i\leq \sum_{i=2}^{n} i V_i=H_n,
$$
where the first equality is in distribution and the others stand almost surely. 
The first equality is true because, although the  $V_i$'s are variables with the ``wrong" parameter, they are not independent of $\tilde r_n$, and this dependence ``accelerates" these exponential random variables. 
Hence,
$$
\mathbb E[|A_n^1-h_n|]\leq\mathbb E[H_n-A_n^1]+\mathbb E[|H_n-h_n|].
$$
The second term is bounded thanks to the $L^1$-convergence of sums of independent exponential variables.
For the first term,
\begin{align*}
\mathbb E[H_n-A_n^1]
&=\mathbb{E}\left[ H_n- \mathbb{E}\left[A^1_n| N_n({\theta_n}),(M_n(\tau_n^{i}))_{i\geq1}\right]\right] \\
&=h_n-\mathbb{E}\left[\sum_{i=N_n({\theta_n})+1}^n \frac{2}{i-1+2c_1+\frac{2c_2M_n(\tau_n^{i})}{i}}\right] \\
&\leq h_n-\mathbb{E}\left[ \sum_{i=N_n({\theta_n})+1}^{n} \dfrac{2}{i-1+2c_1+\frac{2c_2 \sup_t M_n(t)}{i}}\right].
\end{align*}
Then, denote $\lfloor 2c_1(1+\varepsilon_1)\log n\rfloor$ by $m_n$,
and $\lfloor(\log n)^{1+\varepsilon_2}\rfloor$ by $a_n$, for some $\varepsilon_1,\varepsilon_2>0$. 
Now, set the event $E_n=\{\sup_tM_n({t})\leq  m_n, N_n({\theta_n})\leq a_n\}$.
We obtain that
\begin{align*}\label{ciespA1}
\mathbb E[H_n-A_n^1]
&\leq h_n-
\mathbb{E}\left[{\bf1}_{E_n} \sum_{i=N_n({\theta_n})+1}^{n} \dfrac{2}{i-1+2c_1+\frac{2c_2 \sup_t M_n(t)}{i}}\right]\\
&\leq h_n-
\mathbb{P}(E_n) \sum_{i=a_n+1}^{n} \frac{2}{i-1+2c_1+\frac{2c_2m_n}{i}}\notag\\
&\leq h_n-
\mathbb{P}(E_n) \sum_{i=a_n+1}^{n} \frac{2}{i-1+2c_1+\frac{2c_2m_n}{a_n+1}}.\notag
\end{align*}
Since $\frac{m_n}{a_n+1}\leq C(\log n)^{-\varepsilon_2}$  for some constant $C$ and $\mathbb{P}(E_n)$ converges to 1 (thanks to Proposition \ref{summarythetan}), we get that
\begin{equation*}
\mathbb E[H_n-A_n^1]
=o(\log n).
\end{equation*}
The $L^1$-convergence is thus obtained. This implies \eqref{cvA1}.

ii) Let us now prove that 
\begin{equation}\label{cvA2}
\lim_{n\rightarrow\infty}\frac{A^2_n}{\log n}=0
\end{equation}
in probability.
It is clear that, almost surely,
$$A^2_n\leq\tau_n^{(a)}(N_n({\theta_n})+M_n({\theta_n})).$$
Combining Proposition \ref{summarythetan} and Lemma \ref{desviaciontau} (choosing $b<a$), we obtain the result.

iii) Finally, let us prove that
\begin{equation}\label{cvA3}
\lim_{n\rightarrow\infty}\frac{A^3_n}{\log n}=0
\end{equation}
in probability.
To this end, define $U_0=N_n({\tau_n^{(a)}})=\lfloor(\log n)^a\rfloor$ (by definition), $V_0=M_n({\tau_n^{(a)}})$ (which, by Lemma \ref{cotasupM}, is stochastically bounded by $2c_1(1+\varepsilon)\log n$), and, for any $k\geq1$, $U_k$ (resp. $V_k$) as the number of plants (resp. seeds) at the $k$th event after time $\tau_n^{(a)}$. Each event can be a coalescence, an activation or a deactivation.
 Note that the increments of $U_k$ and $V_k$ are in $\{-1,1\}$.
Let $S_n$ be the number of jump times during the interval $(\tau_n^{(a)},\sigma_n]$, i.e.
$$S_n=\inf\lbrace k\geq1:U_k+V_k=1\rbrace.$$
With these notations, the active branch length on this time interval can be written as %can be approximated by
\[A_n^3=\sum_{k=0}^{S_n-1}U_kE_k\]
where, conditional on $U_k$ and $V_k$, the $E_k$'s are independent exponential random variables with respective parameters $\binom{U_k}{2}+c_1U_k+c_2V_k$. So, we have 
%\begin{equation}\label{espA3}
$$\mathbb{E}[A_n^{3}]=\mathbb{E}\left[ \sum_{k=0}^{S_n-1}\frac{U_k}{\binom{U_k}{2}+c_1U_k+c_2V_k}\right].
$$
Now define 
$$D_n:=|\{k\geq0:U_{k+1}-U_k=-1,V_{k+1}-V_k=1\}|$$
as the number of deactivations during this time interval, and observe that
$$\mathbb{E}[D_n]=\mathbb{E}\left[ \sum_{k=0}^{S_n-1}\frac{c_1U_k}{\binom{U_k}{2}+c_1U_k+c_2V_k}\right].$$
This implies that
$$\mathbb{E}[A_n^{3}]=\frac1c_1\mathbb{E}[D_n].$$
So, it is enough to study the expectation of $D_n$.
We decompose 
$$D_n=\sum_{i=2}^{N_n({\tau_n^{(a)}})+M_n({\tau_n^{(a)})}}D_n^{i}$$
where $D_n^{i}$  is the number of deactivations occurring while the total number of lineages equals $i$, that is, $D_n^{i}:=|\{k\geq0:U_{k+1}-U_k=-1,V_{k+1}-V_k=1, U_k+V_k=i\}|$. We will bound $\mathbb E[D_n]$ thanks to the next model from Definition 4.9 of \cite{Blath2016}.

Let $(\widehat{N}_n(t),\widehat{M}_n(t))_{t\geq0}$ having the same transitions as $(N_n(t),M_n(t))_{t\geq0}$ whenever $\widehat N_n(t)\geq\sqrt{\widehat N_n(t)+\widehat M_n(t)}$. If not, coalescence events are not permitted.
For any $i\geq2$, by Lemma 4.10 of \cite{Blath2016}, $\E[D_n^i]\leq \E[\widehat D_n^i]$, where $\widehat D_n^i$ stands for the number of deactivations in this model while $\widehat N_n(t)+\widehat M_n(t)=i$.
In what follows we will give an idea of why $\mathbb E[\widehat D_n^i]=O(i^{-1/2})$, implying that $\mathbb E[ D_n]=O((\log n)^{1/2})$, and hence proving \eqref{cvA3}.

 Details of the proof, which are unfortunately quite tedious, can be found inside the proof of Lemmas 4.10 and 4.11 of \cite{Blath2016}.
 In the sequel, suppose that $c_1=c_2=1$, for sake of simplicity.

Fix $i\geq 2$. The higher values that $\widehat D_n^i$ can take is when the coalescences are not permitted.
Thus suppose that at time $t$, $\widehat N_n(t)+\widehat M_n(t)$ reaches $i$, with $\widehat N_n(t-)=\lfloor\sqrt {i}\rfloor+1\geq\sqrt {i+1}$. This means that $\widehat N_n(t)=\lfloor\sqrt i\rfloor\leq\sqrt i$. Reactivations are then needed to allow a new coalescence.
Conditional on this configuration, the probability that $\widehat D_n^i$ equals 0 is equivalent to
$$\frac{i-\lfloor\sqrt i\rfloor}{i}\times\frac{\binom{\lfloor\sqrt i\rfloor}{2}}{\binom{\lfloor\sqrt i\rfloor}{2}+\lfloor\sqrt i\rfloor}\sim1-\frac{3}{\sqrt i}=:p_i.$$
This corresponds approximately to the probability of one reactivation, followed by one coalescence before one deactivation.
So we have the following almost sure bound
$$\widehat D_n^i\leq\sum_{j=0}^{G^i-1}\Delta_j$$
where $G^i$ is a geometric random variable of parameter $p_i$ and the $\Delta_j$'s give the number of deactivations between each visit of the state $\lfloor\sqrt i\rfloor$.
The time when coalescence is not allowed, is stochastically bounded from above by the time that a random walk that goes up one unit at rate $i-\sqrt{i}$ (rate at of a reactivation) and down at rate $\sqrt{i}$ (rate of a deactivation), started at zero, spends below level $\sqrt i$. The random walk has ballistic speed of order $i$. In particular, it reaches the level $\sqrt{i}$ after $\sqrt{i}/i=1/\sqrt{i}$ units of time in average. During the period in which coalescence events are not allowed there are always less that $\sqrt{i}$ plants, each of which deactivates at rate $c_1(=1)$. Then, we conclude that, for any $j$, 
$$\mathbb E[\Delta_j]\leq\frac{1}{\sqrt{i}}\cdot \sqrt{i}=1$$
This uniform bound implies that 
{$$\E[\widehat D_n^i]\leq \E[G^i-1]\E[\Delta_1]=O\left(\frac{1}{\sqrt i}\right),$$ }
 since $\mathbb E[G^i-1]\sim\frac{3}{\sqrt i}$.
\end{proof}

%\begin{figure}
%\centering
%\begin{tikzpicture}[scale=0.5]
 % \draw (0,2.5) circle(0.5);
 % \node at (0,0.5) {$\times$};
  %\node at (0,5) {$\times$};
  %\node at (0,3) {$\times$};
  %\node at (0,8) {$\times$};
  %\node at (1,2) {$\times$};
  %\node at (2,1) {$\times$};
  %\node at (2,7) {$\times$};
  %\node at (4,6) {$\times$};
  %\node at (6,2.5) {$\times$};
  %\node at (6,5.5) {$\times$};
  %\node at (4,9) {$\times$};
  %\node at (1,10) {$\times$};
  %\node at (5,11) {$\times$};
  %\node at (-2,4)  {$\Delta_1=5$};
   %\node at (-2,10)  {$\Delta_2=1$};
   %%\node at (12,3)  {$\lfloor\sqrt{i}\rfloor=3$};
   %\node at (12,4)  {${i}=10$};
   %\node at (12,13)  {${i}=9$};
%-----------------------------------
  %\draw[thick] (0,0) -- (0,0.5);
  %\draw[dashed] (0,0.5) -- (0,3);
  %\draw[thick] (0,3) -- (0,5);
  %\draw[dashed] (0,5) -- (0,8);
   %\draw[thick] (0,8) -- (0,12);
   %\draw[thick] (-3,8) -- (12,8);
  %\draw[thick] (1,0) -- (1,2);
   % \draw[dashed] (1,2) -- (1,10);
   % \draw[thick] (1,10) -- (1,12);
    % \draw[thick] (-3,12) -- (12,12);
     % \draw[thick] (0.5,12) -- (0.5,13);
  %\draw[thick] (2,0) -- (2,1);
   % \draw[dashed] (2,1) -- (2,7);
    %\draw[thick] (2,7) -- (2,13);
  %\draw[dashed] (3,0) -- (3,13);
  %\draw[dashed] (4,0) -- (4,6);
  %\draw[thick] (4,6) -- (4,9);
   %\draw[dashed] (4,9) -- (4,13);
  %\draw[dashed] (5,0) -- (5,11);
  % \draw[thick] (5,11) -- (5,13);
  %\draw[dashed] (6,0) -- (6,2.5);
  %\draw[thick] (6,2.5) -- (6,5.5);
  %\draw[dashed] (6,5.5) -- (6,13);
  %\draw[dashed] (7,0) -- (7,13);
  %\draw[dashed] (8,0) -- (8,13);
  %\draw[dashed] (9,0) -- (9,13);
%\end{tikzpicture}

%\caption{}
%\label{delta_realization}
%\end{figure}

\subsection{The inactive length}
Consider the inactive  length defined in \eqref{defI}.
\begin{thm}\label{thmI}
Consider the seed bank coalescent starting with $n$ plants and no seeds. Then,
$$\lim_{n\rightarrow\infty}\frac{I_n}{\log n}=\frac{2c_1}{c_2}$$
in probability.
\end{thm}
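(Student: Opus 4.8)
The plan is to avoid re-running the delicate three-phase analysis of Theorem~\ref{thmA} and instead exploit the exact balance between deactivations and reactivations over the whole coalescent, upgrading the expectation identity $c_1\E[A_n]=c_2\E[I_n]$ to a statement in probability. Let $D_n$ and $R_n$ denote, respectively, the total number of deactivation and activation events on the time interval $[0,\sigma_n]$. Since the seed bank starts and ends empty ($M_n(0)=M_n(\sigma_n)=0$) and each deactivation raises the seed count by one while each activation lowers it by one, we have the deterministic identity $D_n=R_n$ almost surely. This is the algebraic backbone of the argument.

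Next I would introduce the two natural compensated counting processes. The deactivation process $t\mapsto D_n(t)$ has predictable intensity $c_1N_n(t)$, so $D_n(t)-c_1\int_0^t N_n(s)\,ds$ is a martingale with predictable quadratic variation $c_1\int_0^t N_n(s)\,ds$; likewise $R_n(t)-c_2\int_0^t M_n(s)\,ds$ is a martingale with quadratic variation $c_2\int_0^t M_n(s)\,ds$. Stopping at $\sigma_n$ (which is integrable, so the required monotone convergence is available) and recalling \eqref{defA} and \eqref{defI} gives the mean identities $\E[D_n]=c_1\E[A_n]$ and $\E[R_n]=c_2\E[I_n]$, hence $c_1\E[A_n]=c_2\E[I_n]$. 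More importantly, the quadratic variation yields the $L^2$ estimates
\begin{equation*}
\E\big[(D_n-c_1A_n)^2\big]=c_1\E[A_n],\qquad \E\big[(R_n-c_2I_n)^2\big]=c_2\E[I_n].
\end{equation*}
Since $\E[A_n]=O(\log n)$ (this follows from the proof of Theorem~\ref{thmA}: $A_n^1$ is stochastically dominated by $H_n$ with $\E[H_n]\sim2\log n$, while $\E[A_n^2]$ and $\E[A_n^3]$ are of lower order), the identity above also forces $\E[I_n]=O(\log n)$, so both centred martingale terms have variance $O(\log n)$.

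The conclusion is then immediate. By Chebyshev's inequality, $D_n-c_1A_n=O_P(\sqrt{\log n})$ and $R_n-c_2I_n=O_P(\sqrt{\log n})$. Combining these with $D_n=R_n$,
\begin{equation*}
c_2I_n=D_n-(R_n-c_2I_n)=c_1A_n+(D_n-c_1A_n)-(R_n-c_2I_n)=c_1A_n+O_P(\sqrt{\log n}).
\end{equation*}
Dividing by $\log n$ and invoking Theorem~\ref{thmA}, which gives $A_n/\log n\to2$ in probability, yields $c_2I_n/\log n\to 2c_1$, that is $I_n/\log n\to 2c_1/c_2$ in probability.

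I expect the main obstacle to be technical rather than conceptual: justifying the optional-stopping and quadratic-variation identities at the \emph{random} time $\sigma_n$ and pinning down the moment bound $\E[A_n]=O(\log n)$ (equivalently the uniform integrability of $A_n/\log n$), since the in-probability statement of Theorem~\ref{thmA} does not by itself control the mean. If one prefers to stay closer to the phase decomposition used for $A_n$, the same identity can be localised: writing $I_n=I_n^1+I_n^2+I_n^3$ with breakpoints $\theta_n$ and $\tau_n^{(a)}$, the bound $M_n(t)\le\sup_{s\ge0}M_n(s)$, which is $O_P(\log n)$ by Lemma~\ref{cotasupM}, together with $\theta_n=O_P(1/\log n)$ from Proposition~\ref{summarythetan} and Lemma~\ref{desviaciontau}, shows $I_n^1,I_n^2=o_P(\log n)$, concentrating all the mass in $I_n^3$. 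Handling $I_n^3$ this way, however, reintroduces the boundary term $M_n(\tau_n^{(a)})$, which is itself of order $\log n$, so the global balance $D_n=R_n$ is the cleaner route.
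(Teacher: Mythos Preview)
Your argument is sound and takes a genuinely different route from the paper. The paper splits $I_n$ at $\theta_n$: the piece $I_n^1=\int_0^{\theta_n}M_n(t)\,dt$ is killed by $I_n^1\le M_n(\theta_n)\theta_n$ together with Proposition~\ref{summarythetan}; the main piece $I_n^2=\int_{\theta_n}^{\sigma_n}M_n(t)\,dt$ is approximated by $\tilde I_n^2=\sum_{k=1}^{M_n(\theta_n)}e_k/c_2$, the total first-activation time of the $M_n(\theta_n)\sim 2c_1\log n$ seeds present at $\theta_n$, and the residual $I_n^2-\tilde I_n^2$ (inactive length created by subsequent re-deactivations) is bounded, via the strong Markov property, by $I_{N_n(\theta_n)}+I_{M_n(\theta_n)}$, i.e.\ copies of the same functional starting from $O_P(\log n)$ lineages, hence $O_P(\log\log n)$. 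Thus the paper's proof is a one-step recursion that never invokes Theorem~\ref{thmA} and is driven entirely by the picture of the system at $\theta_n$. Your approach is global: the accounting identity $D_n=R_n$ plus the counting-process compensations collapse the problem to $c_2I_n=c_1A_n+o_P(\log n)$, so the inactive length is inherited from the active one. This is more conceptual and explains transparently why the constant is $2c_1/c_2$, but it treats Theorem~\ref{thmA} as a black box.

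The obstacle you name is the right one, and it bites slightly harder than you indicate. To turn the quadratic-variation identity into $R_n-c_2I_n=o_P(\log n)$ you need either $\E[I_n]=O(\log n)$ or, via Lenglart's inequality, a priori tightness of $I_n/\log n$; both are equivalent (through $c_1\E[A_n]=c_2\E[I_n]$) to the moment bound $\E[A_n]=O(\log n)$. The paper's proof of Theorem~\ref{thmA} supplies this only partially: $A_n^1$ is dominated in $L^1$ by $H_n$, and $\E[A_n^3]=O(\sqrt{\log n})$ is obtained via the Blath et al.\ coupling, but for $A_n^2$ only convergence in probability is shown. The gap is fillable---for example by bounding the total deactivation count directly as $\E[D_n]=\sum_{i=2}^n\E[D_n^i]$ with $\E[D_n^i]=O(1/i)$ in the coalescence-dominated regime and $\E[D_n^i]=O(1/\sqrt i)$ for $i=O(\log n)$, yielding $\E[D_n]=O(\log n)$---but this still requires separating the two regimes and is not free.
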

\begin{proof}
Divide $I_n$ in two parts
$$I^1_n=\int_0^{\theta_n}M_n(t)dt \quad \text{ and }\quad I^2_n=\int_{\theta_n}^{\sigma_n}M_n(t)dt.$$
It is easy to prove that ${I^1_n}/{\log n}$ converges to 0 in probability by observing that, almost surely,
\[I^{1}_n\leq M_n({\theta_n})\cdot\theta_n,\]
and using Proposition \ref{summarythetan}.

To study $I^2_n$, we approximate it by the accumulated time for the $M_n({\theta_n})$ seeds to activate, namely
\begin{align*}
\tilde I^{2}_n=\sum_{k=1}^{M_n({\theta_n})} \frac{e_k}{c_2}
\end{align*}
where the $e_k$'s are i.i.d. standard exponential random variables.
The asymptotics of this random variable are easily obtained reproducing the arguments of Section \ref{secgamma}.
First, by Proposition \ref{summarythetan}, we have that $M_n(\theta_n)/\log n\to 2c_1$ in probability.
Second, we use the functional law of large numbers for sums of exponential variables. This leads to the desired result,
$$\lim_{n\rightarrow\infty}\frac{\tilde I^2_n}{\log n}=\frac{2c_1}{c_2}$$
in probability.

Finally, the difference between $I^{2}_n$ and $\tilde I^{2}_n$ can be bounded by $I_{N_n(\theta_n)}+I_{M_n(\theta_n)}$.
Indeed, the variable $I_{N_n(\theta_n)}$ bounds the inactive length resulting from the plants present at time $\theta_n$ and the variable $I_{M_n(\theta_n)}$ bounds the inactive length resulting from the seeds present at time $\theta_n$ that activate and deactivate again.
Its expectation is clearly of order $\log\log n$.
This can be seen repeating the earlier arguments of this proof.
\end{proof}

\section{Sampling formula}
Consider the seed bank coalescent at time $\theta_n$ and go back, through the active part of the genealogical tree, until time zero when there are $n$ active lineages and zero inactive lineages.  During this period of time we observe $n-N_n(\theta_n)$ events divided into two types: branching inside one lineage (corresponding to a coalescence) and appearance of a new lineage (corresponding to a deactivation).
When there are $k$ lineages, the probability that a branching event occurs is
$$\frac{\binom{k+1}{2}}{\binom{k+1}{2}+c_1(k+1)}=\frac{k}{k+2c_1}$$
whereas the probability that a new lineage appears is $\frac{2c_1}{k+2c_1}$.
 This observation leads to make a connection with classical Hoppe's urn and the Chinese restaurant process (with parameter $2c_1$), which are the key tools to prove Ewens' sampling formula for the law of the allele frequency spectrum in the neutral model, see Chapter 1.3 in \cite{Durrett2008}.
 However, in our case, the initial configuration is made of a random number $N_n({\theta_n})$ of tables (old lineages) with one client in each.
By applying results of \cite{W1983}, we can obtain a conditional sampling formula corresponding to observe a certain configuration of lineages that passed through the seed bank and lineages that did not deactivate (until time $\theta_n$).

Now, let $k\leq n$ be a positive integer, we define the sets  

\begin{align*}
A(k,n)=\Bigg\{a_i,b_i\geq 0, i\in[n] : \left. \sum_{i=1}^{n}a_i=k \mbox{ and }
 \sum_{i=1}^{n}i(a_i+b_i)=n \right\rbrace
\end{align*}
and
\begin{align*}
\bar A(k,n)=\Bigg\{a_i\geq 0, i\in[n] : \left. \sum_{i=1}^{n}a_i=k \mbox{ and }
 \sum_{i=1}^{n}ia_i\leq n \right\rbrace.
\end{align*}

From equation (3.3.2) in \cite{W1983}, we obtain the next theorem.
\begin{thm}Let $O_i$ be the number of ``old" blocks of size $i$ (i.e. active blocks of size $i$ at time $\theta_n$)  and let $R_i$ be the number of ``recent" blocks of size $i$ (i.e. inactive blocks of size $i$ at time $\theta_n$). Then

\begin{align}\label{Ewcond}
&\mathbb{P}\left(  O_1=a_1,\dots,O_n=a_n,R_1=b_1,\dots,R_n=b_n\mid N_n({\theta_n})\right)  \nonumber\\
\overset{a.s.}{=}&\dfrac{(n-N_n({\theta_n}))!N_n({\theta_n})!}{(N_n({\theta_n})+2c_1)_{(n-N_n({\theta_n}))}}\prod_{i=1}^n \dfrac{1}{a_i!} \prod_{j=1}^n \dfrac{1}{b_j!}\left(\dfrac{2c_1}{j}\right)^{b_j},
\end{align}

 with $ (a_i,b_i)_{i\in[n]}\in A(N_n(\theta_n),n)$. 
\end{thm}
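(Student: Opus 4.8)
The plan is to condition on $N_n(\theta_n)=k$ and to recognise the configuration of old and recent blocks at time $\theta_n$ as the output of a Chinese restaurant process (a Hoppe urn) with parameter $2c_1$ started from $k$ tables each carrying one client, after which the stated formula is exactly the content of equation (3.3.2) of \cite{W1983}. In other words, the theorem should reduce to a \emph{conditional Ewens-type} sampling formula, the conditioning being on the number of old lineages.

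First I would record the dynamical input. On $[0,\theta_n)$ no activation has occurred, so the only events are coalescences and deactivations, and from $i$ active blocks the competing rates are $\binom{i}{2}$ and $c_1 i$. Conditionally on the next event not being an activation it is therefore a coalescence with probability $\frac{\binom i2}{\binom i2+c_1 i}=\frac{i-1}{i-1+2c_1}$ and a deactivation with probability $\frac{2c_1}{i-1+2c_1}$, and --- this is the point that makes the urn structure clean --- these probabilities do not depend on the current number of seeds. Since the coalescing pair and the deactivating block are chosen uniformly, the partition of $[n]$ produced up to $\theta_n$, with each block marked old (still active) or recent (deactivated), is an exchangeable marked partition. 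Reading the active genealogy backwards from $\theta_n$, as in the paragraph preceding the statement, the number of lineages climbs from $k$ to $n$, a branching occurring with probability $\frac{\ell}{\ell+2c_1}$ and a new lineage with probability $\frac{2c_1}{\ell+2c_1}$ when $\ell$ lineages are present; these are precisely the predictive probabilities of the CRP of parameter $2c_1$ begun from $k$ singleton tables, under the identification old block $\leftrightarrow$ one of the $k$ initial tables and recent block $\leftrightarrow$ a table founded by a ``new lineage'' step.

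With the urn identification in hand the formula is a finite computation, which I would organise as follows. For a fixed labelled configuration (a marked set partition of $[n]$ with the prescribed size profile $(a_i),(b_j)$, each old block containing exactly one of the $k$ founding clients) the sequential weight telescopes: the shared denominators over the $n-k$ added clients give $(k+2c_1)_{(n-k)}$, each founding of a recent block contributes a factor $2c_1$, and within every block of size $s$ the joining clients are added at sizes $1,2,\dots,s-1$, i.e. a factor $(s-1)!$. Counting the labelled configurations with the given profile yields the multinomial factor $\tfrac{k!\,(n-k)!}{\prod_i a_i!\prod_j b_j!}\cdot\tfrac{1}{\prod_i((i-1)!)^{a_i}\prod_j (j!)^{b_j}}$. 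Multiplying the count by the per-configuration weight, the factors $((i-1)!)^{a_i}$ attached to old blocks cancel and, using $(j-1)!/j!=1/j$ for recent blocks, one is left with $\frac{(n-k)!\,k!}{(k+2c_1)_{(n-k)}}\prod_i\frac1{a_i!}\prod_j\frac1{b_j!}\big(\frac{2c_1}{j}\big)^{b_j}$, which is the claim with $k=N_n(\theta_n)$; this is the arithmetic already packaged in (3.3.2) of \cite{W1983}.

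The main obstacle is the reduction to the urn, not the arithmetic. Two points need care. First, one must check that conditioning on $N_n(\theta_n)=k$ (equivalently, on the first activation occurring after exactly $n-k$ coalescence/deactivation events) does not bias the marked partition: this holds because, given the sequence of events, the activation clocks are independent of which blocks coalesce or deactivate, so the seed-independence of the split established above survives the conditioning. Second, in the true dynamics a block freezes the instant it deactivates and cannot grow thereafter, whereas in the CRP a recent table may still receive clients; the two constructions give the same law for the final marked partition only because that partition is exchangeable and both share the predictive rule above, so it is the equality of \emph{laws}, not of dynamics, that is invoked. A minor bookkeeping point is the relabelling of a single block from recent to old at the instant $\theta_n$ caused by the first reactivation, which shifts one block between the $(a_i)$ and $(b_j)$ and is what makes the constraint $(a_i,b_i)_{i}\in A(N_n(\theta_n),n)$ the correct one when read at time $\theta_n$ itself.
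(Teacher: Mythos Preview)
Your reduction --- identify the backward dynamics on $[0,\theta_n)$ with a Hoppe urn (Chinese restaurant process) of parameter $2c_1$ begun from $N_n(\theta_n)$ singleton tables and read off equation (3.3.2) of \cite{W1983} --- is exactly what the paper does; the paper in fact gives no argument beyond the paragraph preceding the theorem and that citation, so your explicit multinomial bookkeeping already goes further than the paper.

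Your concern (1), however, is a real obstruction, and your dismissal of it is not correct. It is true that the activation clocks are independent of \emph{which} pair merges or \emph{which} block deactivates, but they are not independent of the running seed count $M_n$, and that count is a function of the coalescence/deactivation \emph{type sequence}. The event $\{N_n(\theta_n)=k\}$ requires ``no activation while $N_n$ descends through $n,n-1,\dots$'' followed by an activation at the appropriate level; its probability given a type sequence depends on the whole seed profile along the path, so the conditioning reweights the type sequence and hence the joint law of the $(O_i,R_i)$. A quick check: with $n=3$, $c_1=c_2=1$ and conditioning on $N_n(\theta_n)=2$, the contributing paths C--D--A, D--C--A, D--D--A have probabilities $\tfrac16,\tfrac1{16},\tfrac16$; the first two yield $(O_1,O_2)=(1,1)$ and the third $(O_1,R_1)=(2,1)$, so the conditional probability of the first configuration is $\tfrac{11}{19}$, not the $\tfrac12$ delivered by the displayed formula. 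Watterson's identity applies cleanly when one conditions on the level reached by the \emph{embedded} coalescence/deactivation chain, which is not the same as conditioning on the activation-dependent stopping $\theta_n$; neither your argument nor the paper's bare citation bridges that gap. (Concern (2), by contrast, evaporates in the backward reading: after a reverse deactivation the seed block is a plant lineage and can branch, so recent tables do receive clients exactly as in the CRP --- no separate exchangeability appeal is needed.)
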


The notation $x_{(n)}$ stands for the ascending factorial, that is, $x_{(n)}=x(x+1)\dots(x+n-1)$.

{
\begin{rem}
From the latter result and Proposition \ref{summarythetan}, we can obtain an approximate of a sampling formula for large $n$.
\begin{align*}
&\mathbb{P}\left(O_1=a_1,\dots,O_n=a_n,R_1=b_1,\dots,R_n=b_n\right) \\
&=\int_0^{\infty}\P\left( O_1=a_1,\dots,O_n=a_n,R_1=b_1,\dots,R_n=b_n|N_n({\theta_n})=\lfloor z\log n\rfloor\right)\times\\
&\hspace{0.5cm}\P(N_n({\theta_n})=\lfloor z\log n\rfloor)dz\\
&{\sim}\prod_{i=1}^n \dfrac{1}{a_i!} \prod_{j=1}^n \dfrac{1}{b_j!}\left(\dfrac{2c_1}{j}\right)^{b_j}\times\\
&\hspace{0.5cm}\int_0^\infty\frac{\Gamma(n-z\log n+1)\Gamma(z\log n+1)\Gamma(z\log n+2c_1)}{\Gamma(n+2c_1)}.\frac{4c_1c_2}{z^2}e^{-\frac{4c_1c_2}{z}}dz.
\end{align*}
which does not depend on the non-observable variable $N_n(\theta_n)$.
The variables $O_i$ and $R_i$ can be inferred if we are capable of deciding if a present individual has visited the seed bank or not.
This provides a possible method of estimation of the parameters of the seed bank model.
\end{rem}}
From (\ref{Ewcond}), we obtain the probability generating function of the old and recent blocks.

\begin{corollary} Let $O_1,...O_n,R_1,...,R_n$ be random variables with joint density given by (\ref{Ewcond}). Then, their (conditional) probability generating function is

\begin{align}\label{pgf}
\E\left[\prod_{i=1}^n t_i^{O_i} \prod_{j=1}^n s_j^{R_j} |N_n({\theta_n})\right]=\dfrac{(n-N_n({\theta_n}))!N_n({\theta_n})!}{(N_n({\theta_n})+2c_1)_{(n-N_n({\theta}_n))}}\times\nonumber\\
\sum_{\substack{a_1,...,a_n,b_1,...,b_n\in A(N_n(\theta_n),n)}}\prod_{i=1}^n\dfrac{(t_i)^{a_i}}{a_i!}\prod_{j=1}^n\dfrac{1}{b_j!}\left(\frac{2c_1s_j}{j}\right)^{b_j}.
\end{align}

\end{corollary}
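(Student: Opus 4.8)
The plan is to compute the generating function directly from its definition, using the explicit conditional law \eqref{Ewcond} established in the preceding theorem. This is a bookkeeping computation rather than a result requiring a genuinely new idea; the only care needed is in tracking the support of the summation and in correctly absorbing the monomials $t_i^{a_i}$ and $s_j^{b_j}$ into the factorial weights.

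First I would write, by definition of the conditional probability generating function,
\begin{equation*}
\E\left[\prod_{i=1}^n t_i^{O_i}\prod_{j=1}^n s_j^{R_j}\,\Big|\,N_n({\theta_n})\right]
=\sum_{(a_i,b_i)_{i\in[n]}}\left(\prod_{i=1}^n t_i^{a_i}\prod_{j=1}^n s_j^{b_j}\right)\P\left(O_1=a_1,\dots,R_n=b_n\mid N_n({\theta_n})\right),
\end{equation*}
where the sum ranges over all admissible configurations. By \eqref{Ewcond}, the conditional law vanishes unless $(a_i,b_i)_{i\in[n]}\in A(N_n({\theta_n}),n)$, so the summation is effectively restricted to this set, which becomes the index set appearing in the statement.

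Next I would substitute the right-hand side of \eqref{Ewcond} into the sum. The prefactor
\[
\frac{(n-N_n({\theta_n}))!\,N_n({\theta_n})!}{(N_n({\theta_n})+2c_1)_{(n-N_n({\theta_n}))}}
\]
does not depend on the configuration $(a_i,b_i)_{i\in[n]}$, so it factors out of the sum. What remains inside is
\[
\sum_{(a_i,b_i)\in A(N_n({\theta_n}),n)}\prod_{i=1}^n t_i^{a_i}\prod_{j=1}^n s_j^{b_j}\cdot\prod_{i=1}^n\frac{1}{a_i!}\prod_{j=1}^n\frac{1}{b_j!}\left(\frac{2c_1}{j}\right)^{b_j}.
\]
Combining $t_i^{a_i}$ with $1/a_i!$ gives the factor $t_i^{a_i}/a_i!$, and combining $s_j^{b_j}$ with $(2c_1/j)^{b_j}/b_j!$ gives $(2c_1 s_j/j)^{b_j}/b_j!$, which reproduces exactly the two products displayed in \eqref{pgf}.

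There is essentially no obstacle to overcome here, since every step is an algebraic rearrangement licensed by the linearity of the (conditional) expectation and by the fact that the normalizing constant is configuration-independent. The one point worth stating explicitly is the identification of the summation range: that the monomials corresponding to configurations outside $A(N_n({\theta_n}),n)$ contribute nothing, which is precisely the content of the support restriction in \eqref{Ewcond}. This completes the identification of the generating function with the claimed expression.
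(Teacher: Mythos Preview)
Your proposal is correct and matches what the paper does: the corollary is stated as an immediate consequence of \eqref{Ewcond} without any separate proof, and your argument—summing the definition of the conditional probability generating function over the support $A(N_n(\theta_n),n)$, factoring out the configuration-independent prefactor, and absorbing the monomials into the weights—is exactly the direct computation the authors have in mind.
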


%If the conditions in set (\ref{set})were not met, we could rewrite (\ref{pgf}) as,
%\begin{align}\label{pgf2}
%\dfrac{(n-N_n({\theta_n}))!N_n({\theta_n})!}{(N_n({\theta_n})+2c)_{(n-N_n({\theta}_n))}}
% \exp\left\lbrace \sum_{i=0}^n\left(  t_i+ \frac{2cs_i}{i}\right) \right\rbrace. 
% \end{align}

Following the idea of Watterson \cite{W1983}, we use two artificial variables, $u\in(-1,1)$ and $v\in(-1,1)$. They will help us to rewrite (\ref{pgf}) in a simpler way. First, observe that for $(a_i,b_i)\in A(k,n)$, 
\begin{equation*}
\prod_{i=1}^n (uv^i)^{a_i} \prod_{j=1}^n (v^j)^{b_j}=u^{\sum_{i=1}^n a_i} v^{\sum_{i=1}^n i(a_i+b_i)}=u^kv^n .
\end{equation*} 
Now, let $c_{k,n}$ be the multiplying coefficient of $u^kv^n$ in $\exp\left\lbrace \sum_{i=1}^n  uv^it_i+\sum_{j=1}^\infty \frac{2c_1}{j}s_jv^j\right\rbrace $. We can rewrite (\ref{pgf}) as

\begin{align}\label{pgf3}
\E\left[\prod_{i=1}^n t_i^{O_i} \prod_{j=1}^n s_j^{R_j} |N_n({\theta_n})\right]=\dfrac{(n-N_n({\theta_n}))!N_n({\theta_n})!}{(N_n({\theta_n})+2c_1)_{(n-N_n({\theta}_n))}}c_{N_n({\theta}_n),n}.
\end{align}
%The factorization in (\ref{Ewcond}) shows that,  if it were not for the restrictions (\ref{set}) , $O_1,...,O_n$ would behave as independent Poisson variables with mean 1, and $R_1,...,R_n$ with means $\dfrac{2c}{1},\dfrac{2c}{2},...,\dfrac{2c}{n}$, respectively. 

From  this relation, we obtain the probability generating function of the lineages that have not gone through the seed bank at time $\theta_n$.

\begin{corollary}\label{copgf} Let $O_i$ be the number of ``old" blocks of size $i$ (i.e. active blocks of size $i$ at time $\theta_n$). Then, the joint probability generating function of $O_1,O_2,...,O_n$ is
\begin{align}\label{pgfa}
\E\left[\prod_{i=1}^n t_i^{O_i}  |N_n({\theta_n})\right]=\sum_{a_1,..,a_n\in\bar{A}(N_n({\theta_n}),n)}\dfrac{N_n({\theta_n})!}{a_1!a_2!...a_n!}t_1^{a_1}t_2^{a_2}\cdot\cdots t_n^{a_n}\dfrac{\binom{2c_1+n-z-1}{n-z}}{\binom{2c_1+n-1}{n-N_n{\theta_n}}}
\end{align}

where $z={\sum_{i=1}^nia_i}.$ 
\end{corollary}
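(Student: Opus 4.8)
The plan is to read off the marginal law of the old blocks from the joint generating function \eqref{pgf} by specializing all the ``recent'' variables to $1$. Setting $s_j=1$ for every $j$ in \eqref{pgf} gives
\begin{equation*}
\E\left[\prod_{i=1}^n t_i^{O_i}\mid N_n(\theta_n)\right]=\frac{(n-N_n(\theta_n))!\,N_n(\theta_n)!}{(N_n(\theta_n)+2c_1)_{(n-N_n(\theta_n))}}\sum_{(a,b)\in A(N_n(\theta_n),n)}\prod_{i=1}^n\frac{t_i^{a_i}}{a_i!}\prod_{j=1}^n\frac{1}{b_j!}\left(\frac{2c_1}{j}\right)^{b_j},
\end{equation*}
so that the task reduces to summing out the $b_j$'s. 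Fixing the vector $(a_i)_i$ with $\sum_i a_i=N_n(\theta_n)=:k$ and writing $z=\sum_i i a_i$, the membership condition in $A(k,n)$ forces $\sum_j j b_j=n-z$; hence the inner sum ranges over all nonnegative integer sequences $(b_j)$ with $\sum_j j b_j=n-z$, and the outer sum ranges precisely over those $(a_i)$ with $z\le n$, that is over $\bar A(k,n)$.

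The crux is the generating-function evaluation of the inner sum. Introducing a bookkeeping variable $v$ and using the exponential structure (only finitely many $b_j$ are nonzero since $n-z\le n$, so the range of $j$ is immaterial),
\begin{equation*}
\sum_{m\ge0}v^m\sum_{\substack{(b_j):\,\sum_j j b_j=m}}\prod_{j\ge1}\frac{1}{b_j!}\left(\frac{2c_1}{j}\right)^{b_j}=\prod_{j\ge1}\exp\left(\frac{2c_1 v^j}{j}\right)=\exp\left(2c_1\sum_{j\ge1}\frac{v^j}{j}\right)=(1-v)^{-2c_1}.
\end{equation*}
By the generalized binomial theorem the coefficient of $v^{\,n-z}$ equals $\binom{2c_1+n-z-1}{n-z}$, which is exactly the numerator appearing in \eqref{pgfa}. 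Substituting this value for the inner sum yields the claimed form up to the constant prefactor.

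It then remains to rewrite that prefactor. I would use the elementary identity
\begin{equation*}
(k+2c_1)_{(n-k)}=(2c_1+k)(2c_1+k+1)\cdots(2c_1+n-1)=(n-k)!\binom{2c_1+n-1}{n-k},
\end{equation*}
valid for real $2c_1$ under the ascending-factorial reading of the binomial coefficient, so that $\frac{(n-k)!}{(k+2c_1)_{(n-k)}}=\binom{2c_1+n-1}{n-k}^{-1}$. Combining this with the surviving factor $k!=N_n(\theta_n)!$ and the weights $1/a_i!$ produces the multinomial coefficient $N_n(\theta_n)!/(a_1!\cdots a_n!)$ together with the denominator $\binom{2c_1+n-1}{n-N_n(\theta_n)}$ of \eqref{pgfa}, completing the proof. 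No step is genuinely hard: the only real content is the generating-function computation of the second paragraph, and the only point demanding care is to verify that eliminating the constraint $\sum_j j b_j=n-\sum_i i a_i$ correctly replaces the index set $A(N_n(\theta_n),n)$ by $\bar A(N_n(\theta_n),n)$.
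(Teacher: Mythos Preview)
Your proof is correct and follows essentially the same route as the paper: set $s_j=1$, use the identity $\exp\bigl(\sum_{j\ge1}\tfrac{2c_1v^j}{j}\bigr)=(1-v)^{-2c_1}$, and extract the coefficient of $v^{\,n-z}$ via the generalized binomial series. The only cosmetic difference is that the paper carries both bookkeeping variables $u$ and $v$ through \eqref{pgf3} and then extracts the coefficient of $u^{N_n(\theta_n)}v^n$ from $(1-v)^{-2c_1}\exp\{u\sum_i v^i t_i\}$, whereas you fix $(a_i)$ first and use $v$ alone to sum out the $b_j$'s; the computations are otherwise identical.
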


\begin{proof}
 First, we will write explicitly the term $c_{k,n}$ when $s_j=1$ for all $j$. Observe that,   
\begin{align*}
 \exp\left\lbrace \sum_{i=1}^n  uv^it_i+\sum_{j=1}^\infty\frac{2c_1}{j}v^j\right\rbrace 
 &=(1-v)^{-2c_1} \exp\left\lbrace u\sum_{i=1}^{n}  v^it_i\right\rbrace\\
 &=(1-v)^{-2c_1}\sum_{k=0}^{\infty}\dfrac{\left[ u\sum_{i=1}^{n}  v^it_i\right] ^{k}}{k!}.
\end{align*}

It implies that the coefficient of $u^k$ in the latter expression is
\begin{align*}
\dfrac{\left[ \sum_{i=1}^{n}  (v^it_i)\right] ^{k}}{k!}(1-v)^{-2c_1}=&\dfrac{\left[ \sum_{i=1}^{n}  (v^it_i)\right] ^{k}}{k!}\left( \sum_{j=0}^{\infty} \binom{2c_1+j-1}{j}v^j\right).
\end{align*}
Now, we need to find the coefficient of $v^n$ in the latter expression.
First, observe that
\begin{align*}
\left[ \sum_{i=1}^{n}  (v^it_i)\right] ^{k}=\sum_{{a_1+...+a_n=k}} \dfrac{k!}{a_1!a_2!...a_n!}t_1^{a_1}t_2^{a_2}\cdot\cdots t_n^{a_n}v^{z}
\end{align*}
where $z={\sum_{i=1}^nia_i}$.
Hence , for $z\leq n$, the coefficient of $v^{n-z}$ in the expression $\left( \sum_{j=0}^{\infty} \binom{2c_1+j-1}{j}v^j\right)$ is $\binom{2c_1+n-z-1}{n-z}$.
 So,

\begin{align*}
 c_{k,n} =\dfrac{1}{k!}\sum_{a_1,...,a_n\in\bar A(k,n)} \dfrac{k!}{a_1!a_2!...a_n!}t_1^{a_1}t_2^{a_2}\cdot\cdots t_n^{a_n}\binom{2c_1+n-z-1}{n-z}.
\end{align*}

Thus, replacing $c_{N_n({\theta}_n),n}$ and $s_j=1$ for all $j$ in (\ref{pgf3}) we have the result.
\end{proof}

From the previous corollary we obtain the joint distribution of the lineages which have not gone through the seed bank at time $\theta_n$.
\begin{align*}
\P\left[ O_1=a_1,....,O_n=a_n|N_n({\theta_n})\right]&\overset{a.s.}{=}\frac{N_n({\theta_n})!}{a_1!a_2!\cdots a_n!}\frac{\binom{2c_1+n-z-1}{n-z}}{\binom{2c_1+n-1}{n-N_n({\theta_n})}}
\end{align*}
when $a_1,\dots,a_n\in\bar A(N_n(\theta_n),n)$.

Now, by taking $t_i=t^i$ and $s_j=1$ for all $i,j\in[n]$ in (\ref{pgf3}), and finding the corresponding coefficient $c_{N_n({\theta}_n),n}$, we obtain the conditional probability generating function of the number of lineages at time zero that has not been through the seed bank until time $\theta_n$
\begin{align*}
\E\left[  t^{\sum_{i=1}^niO_i} |N_n({\theta_n})\right]&=\sum_{z=N_n({\theta_n}) }^nt^z \dfrac{\binom{2c_1+n-z-1}{n-z}\binom{z-1}{z-N_n({\theta_n})}}{\binom{2c_1+n-1}{n-N_n({\theta_n})}}.
\end{align*}

%Also, we obtain the probability generating function of the number of active lineages at time $\theta_n$ 
%\begin{align}\label{pgfactive}
%\E\left[  t^{\sum_{i=1}^nO_i} |N_n({\theta_n})\right]&=t^{N_n({\theta_n})}.
%\end{align}
%It is obtained from (\ref{pgf3}) with $t_i=t$ and $s_j=1$ for all  $i,j\in[n]$, and developing the expression $\exp\left\lbrace \sum_{i=0}^{\infty} \left(  uv^it+ \frac{2c}{i}v^i\right)\right\rbrace$ to find the coefficient $c_{N_n({\theta_n}),n}$.

% First, we will to find $c_{k,n}$, so
%\begin{align}
 %\exp\left\lbrace \sum_{i=0}^{\infty} \left(  uv^it+ \frac{2c}{i}v^i\right)\right\rbrace&= \exp\left\lbrace ut\sum_{i=0}^{\infty} \left(  v^i\right) - 2c\log(1-v)\right\rbrace\\
 %=&\exp\left\lbrace \dfrac{utv}{(1-v)} - 2c\log(1-v)\right\rbrace\\
% =&\sum_{k=0}^{\infty}\dfrac{\left[ utv(1-v)^{-1}\right] ^{k}}{k!}(1-v)^{-2c}.
%\end{align}
%So, the coefficient for $u^k$ is
%\begin{align*}
%\dfrac{\left[ tv(1-v)^{-1}\right] ^{k}}{k!}(1-v)^{-2c}=&\dfrac{t^kv^k(1-v)^{-(2c+k)}}{k!}=\dfrac{t^k}{k!}%\left( \sum_{j=0}^{\infty} \binom{2c+k+j-1}{j}v^{k+j}.\right), 
%\end{align*}
%then, the coefficient for $u^kv^{n}$, is
%\begin{align*}
%\dfrac{t^k}{k!} \binom{2c+n-1}{n-k}.
%\end{align*}

%Thus, replacing $c_{N_n(\theta_n),n}$ and $s_j=1$ for all $j\in[n]$ in (\ref{pgf3}) we have the result.
%\textcolor{red} {lo ultimo lo podriamos quitar del paper, sòlo lo puse que ustedes vean como salen las cuentas.} 

Finally, from (\ref{pgf3}), by taking $t_i=1$ for all $i\in[n]$, and from (\ref{pgfa}) we can find the conditional expectations of $O_j$ and $R_j$ for all ${j=1,2,...n-N_n({\theta_n})}$,
$$
\E\left( O_j |N_n({\theta_n})\right)
=N_n({\theta_n})\dfrac{\binom{2c_1+n-j-1}{n-j-N_n({\theta_n})+1}}{\binom{2c_1+n-1}{n-N_n({\theta_n})}}
$$
and
$$\E\left( R_j |N_n({\theta_n})\right)=\frac{2c_1}{j}\dfrac{\binom{2c_1+n-j-1}{n-j-N_n({\theta_n})}}{\binom{2c_1+n-1}{n-N_n({\theta_n})}}.
$$

%Last equations and  Propositions (\ref{cotasupN}), (\ref{cotainfN})  suggest an approximation for the expectation of the number of old blocks% we know that $N_n(\theta_n)$ is of order $\log n$  so, we can obtain an approximation of the expectation of $O_j$ and $R_j$ for all $j\in[n]$%$\P((\log n)^{1-\varepsilon}<N_n(\theta_n)<(\log n)^{1+\varepsilon}$ 
%
%$$
%\dfrac{(n-j-\log n+2)_{(j-1)}(2c+\log n-1)}{(2c+n-j)_{(j)}},\mbox{ } j\in[n]
%$$
%and for the recent blocks
%$$\frac{2c}{j}\dfrac{(n-j-\log n+1)_{(j)}}{(2c+n-j)_{(j)}},\mbox{ } j\in[n].
%$$

%Finally, we calculate the number of blocks in the kingman coalescent at time $\dfrac{1}{\log n}$ to compare this with the seed-bank coalescent. 
%\begin{align*}
%\sum_x^n\dfrac{2}{i(i-1)}=2\left( \dfrac{1}{x-1}-\dfrac{1}{n}\right) =\frac{1}{\log n}
%\end{align*}
%\begin{align*}
%x-1=\dfrac{2\log n}{1+\dfrac{2\log n}{n}}
%\end{align*}

{\bf Acknowledgement.}
This project was partially supported by CoNaCyT grant FC-2016-1946.

\end{document}